\newtheorem{theorem}{Theorem}[section]
\newtheorem{lemma}[theorem]{Lemma}
\newtheorem{prop}[theorem]{Proposition}
\newtheorem{corollary}{Corollary}
\theoremstyle{definition}
\newtheorem{definition}{Definition}
\newtheorem{ex}{Example}
\theoremstyle{remark}
\newtheorem {re}{Remark}
\DeclareMathOperator{\Aut}{Aut}
\DeclareMathOperator{\td}{tr.deg.}
\def\Ker{{\rm Ker}\,}
\def\LND{{\rm LND}\,}
\def\ML{{\rm ML}\,}
\def\CC{{\mathbb C}}
\def\KK{{\mathbb K}}
\def\ZZ{{\mathbb Z}}
\def\NN{{\mathbb N}}
\def\QQ{{\mathbb Q}}
\def\AA{{\mathbb A}}
\begin{document}

\date{}

\author{Ilya Boldyrev}
\address{Lomonosov Moscow State University, Faculty of Mechanics and Mathematics, Department of Higher Algebra, Leninskie Gory 1, Moscow, 119991 Russia; \newline
and
\newline
HSE University, Faculty
of Computer Science, Pokrovsky Boulevard 11, Moscow, 109028
Russia}\email{boldyrev.i.al@gmail.com}

\title[Makar-Limanov invariants of affine toric varieties]{Makar-Limanov invariants of nonnormal affine toric varieties}

\markboth{Ilya Boldyrev}{Makar-Limanov invariants of nonnormal affine toric varieties}

\subjclass[2020]{Primary 14R30,  14J50; Secondary 13A50, 14L30.}

\keywords{Toric variety, Makar-Limanov invariant, locally nilpotent derivation, rigid variety}

\thanks{Supported by the grant RSF-DST 22-41-02019.}

\maketitle

\begin{abstract}
We prove the equality of the Makar-Limanov invariant and the modified Makar-Limanov invariant in the case of not necessary normal affine toric varieties. Also we give a combinatorial description of these invariants.
\end{abstract}

\section{Introduction}
Let $\KK$ be an algebraically closed field of characteristic zero. Let us consider an irreducible algebraic variety $X$ over $\KK$. If an algebraic torus $T\cong(\KK^\times)^n$ acts on $X$ with an open orbit, then $X$ is called {\it toric}. Often the definition of a toric variety includes the condition of normality, but in this paper normality is not assumed.

Let $B$ be an affine $\KK$-domain.
The set of locally nilpotent 
$\KK$-derivations of $B$ is denoted by $\LND(B)$.
We denote the kernel of a locally nilpotent derivation $D$ by $\Ker D$. 
The {\it Makar-Limanov invariant} of $B$, denoted 
by $\ML(B)$, is defined as
$$
\ML(B):=\bigcap_{D \in \LND(B)} \Ker~D.
$$ 
The Makar-Limanov invariant has been a powerful 
tool for solving some major problems in affine algebraic geometry
like the Linearization Problem \cite[pp. 195--204]{F}. 

Let us denote by $\KK^{[n]}$ the algebra of polynomials in $n$ variables over a field $\KK$. When $\KK$ is an algebraically closed field of characteristic zero, the Makar-Limanov invariant also gives a characterization  of 
$\KK^{[1]}$, i.e., for an affine 
$\KK$-domain $B$ with $\td_{\KK}B=1$ we have $B=\KK^{[1]}$ if and only if $\ML(B)=\KK$, see \cite[Lemma 2.3]{CML}. 
However, the triviality of the Makar-Limanov invariant alone does not characterize
the affine $2$-space, i.e. $\dim B=2$ and $\ML(B)=\KK$ $\nRightarrow$ $B= \KK^{[2]}$, see \cite[Theorem~2.8]{DG}.

Consider the subset $\LND^{*}(B) \subseteq \LND(B)$ defined by 
$$
\LND^{*}(B)=\{D \in \LND(B)~|~Ds=1~ \text{for some} ~ s \in B\}.
$$
An element $s \in B$ with $Ds = 1$ is called {\it slice} for $D$.

Then we define 
$$
\ML^{*}(B):= \bigcap_{D \in \LND^{*}(B)} \Ker~D.
$$ 
This invariant is introduced by G. Freudenburg  in \cite[p. 237]{F}.
We are going to call it the Makar-Limanov--Freudenburg invariant or ML-F invariant.

Now let $B = \KK[X]$ be the algebra of regular functions of some affine toric variety $X$. The algebra $\KK[X]$ admits
a natural grading by the group of characters $\mathfrak{X}(T)$ of the torus $T$. Let us denote by $\LND_{h}(B)$ the subset of all homogeneous (with respect to this grading) derivations from $\LND(B)$.
So on, we can introduce

$$
\ML_{h}(B):= \bigcap_{D \in \LND_{h}(B)} \Ker~D.
$$.

Here are the main results of this paper.

\begin{theorem}\label{m}
Let $X$ be an affine toric variety. Then
\begin{itemize}
\item[{\rm (a)}]
$\ML_h(\KK[X]) = \ML(\KK[X])$;
\item[{\rm (b)}]
if the algebra $\KK[X]$ admits an LND with slice, then 
$$\ML_{h}(\KK[X]) = \ML(\KK[X]) = \ML^{*}(\KK[X]).$$
\end{itemize}
\end{theorem}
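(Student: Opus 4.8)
The plan is to reduce everything to the combinatorial structure of the toric variety and exploit the torus action to replace arbitrary LNDs by homogeneous ones. Write $B=\KK[X]$ with its $\mathfrak{X}(T)\cong\ZZ^n$-grading, so that $B=\bigoplus_{m\in S}\KK\chi^m$ for some submonoid $S\subseteq\ZZ^n$ generating the lattice $M$ of characters. For part (a), the standard device is: given any $D\in\LND(B)$, decompose $D=\sum_{e} D_e$ into its homogeneous components with respect to the $\ZZ^n$-grading, where $D_e$ raises degree by $e\in\ZZ^n$. A classical argument (going back to Rentschler / Liendo, and used throughout the toric-LND literature) shows that the component $D_e$ of lowest (or highest) degree in a suitable term order is again locally nilpotent, and that $\Ker D\subseteq\bigcap$ of the kernels of enough such homogeneous pieces; more precisely one shows $\ML(B)\supseteq\ML_h(B)$ is the easy inclusion (fewer derivations), and for the reverse one argues that every $f\in\ML_h(B)$ is killed by each homogeneous $D_e$ arising from any $D$, hence is in $\Ker D$ by reassembling. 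So the first step is to carefully run this homogenization argument to get $\ML(B)=\ML_h(B)$.

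The second step, still for part (a), is to give the promised combinatorial description. Homogeneous LNDs of a toric algebra are classified by "Demazure roots": a homogeneous $D$ has a degree $e\in M$ such that $\langle e,\cdot\rangle$ takes the value $-1$ on exactly one ray generator of the relevant cone(s) and is $\ge 0$ on the others, and $D=\chi^e\,\partial_\rho$ up to scalar, where $\partial_\rho$ is the derivation dual to that ray. One must adapt this to the nonnormal setting, where instead of a single cone one works with the monoid $S$ itself (its saturation gives the normal cone $\sigma$, but the roots must be compatible with $S$, not just $\sigma$). Then $\Ker D$ for such a homogeneous $D$ is spanned by the $\chi^m$ with $\langle e_\rho, m\rangle=0$ — i.e. by the face of $S$ orthogonal to the root direction — and intersecting over all admissible roots yields $\ML_h(B)$ as the subalgebra spanned by $\chi^m$ for $m$ in the intersection of all these faces. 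This gives an explicit lattice-theoretic formula; the bookkeeping to make sure the roots are exactly those compatible with the possibly nonsaturated monoid $S$ is the place where nonnormality genuinely enters.

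For part (b), assume $B$ has an LND with a slice, so $\LND^*(B)\neq\emptyset$; the goal is $\ML^*(B)=\ML(B)$. The inclusion $\ML^*(B)\supseteq\ML(B)$ is trivial since $\LND^*(B)\subseteq\LND(B)$. For the reverse we want: every $f\in\ML^*(B)$ lies in $\Ker D$ for every $D\in\LND(B)$. Here the key is that the existence of one slice-admitting LND is a strong structural constraint: by a theorem in the spirit of Freudenburg's book, if $D$ has a slice $s$ then $B\cong(\Ker D)^{[1]}$, and one shows $\Ker D$ is itself a (lower-dimensional) affine toric algebra, so the whole situation trivializes — in fact one expects $X\cong Y\times\AA^1$ with $Y$ toric, and then a homogeneity/degree argument shows that any homogeneous LND of $B$ can be modified to have a slice (multiply by a suitable $\chi^m$ or add a partial derivative along the $\AA^1$-factor), whence $\ML_h=\ML^*$, and combining with part (a) closes the loop. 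Concretely: take $f\in\ML^*(B)$ and $D\in\LND_h(B)$ arbitrary; produce from $D$ a (possibly different) $D'\in\LND^*(B)$ with $\Ker D\subseteq\Ker D'$ — e.g. using the $\AA^1$-coordinate to build a slice without shrinking the kernel too much — so $f\in\Ker D'\subseteq$, after the reduction, $\Ker D$; then part (a) gives $f\in\ML_h(B)=\ML(B)$.

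The main obstacle I anticipate is not part (a)'s homogenization (that is routine) but making the root/kernel dictionary airtight for nonnormal $S$ and, in part (b), constructing from an arbitrary homogeneous LND a slice-admitting one \emph{without} enlarging the kernel — i.e. showing that the presence of a single slice forces enough structure (a product with $\AA^1$, compatible with the torus) that this surgery is always possible. Verifying that the $\AA^1$-factor can be taken torus-equivariant, and that $\Ker D$ for the given $D$ embeds into the kernel of the modified slice-admitting derivation, is the delicate point; everything else is monoid combinatorics.
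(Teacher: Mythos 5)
Your outline gets the easy inclusions right and the general shape of the argument is reasonable, but both of the steps you label as routine or defer are exactly where the real content lies, and as written they have genuine gaps. For part (a): it is \emph{not} true that every homogeneous component $D_e$ of an LND $D$ kills an element of $\ML_h(B)$, so "reassembling" does not work. Only the components whose degree is a Demazure root are locally nilpotent; the remaining components have degree $e\in\sigma^\vee$ and are of the form $\chi^m\mapsto\lambda\langle\rho,m\rangle\chi^{m+e}$, which are semisimple-type and need not vanish on $\ML_h$. The paper's proof handles this by (i) extending $D$ to the normalization $\KK[X]_{norm}$, which stays locally nilpotent by integrality plus Vasconcelos's theorem — this step is needed because the classification of homogeneous derivations is only available for the saturated algebra — and then (ii) observing that for a homogeneous $\chi^a$ in the intersection of the relevant facets, the LND components kill $\chi^a$ while each remaining component sends $\chi^a$ to a multiple of $\chi^a$ by an element of the algebra, so $D'(\chi^a)$ is divisible by $\chi^a$ and hence zero by Freudenburg's Principle~5. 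Without the divisibility argument (or some substitute) your reverse inclusion does not close. You also need the nonnormal input that the LND components correspond precisely to \emph{almost saturated} facets (Lemma~\ref{eu}), which is what ties the combinatorics of holes to $\ML_h$.

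For part (b), your intended surgery — from an arbitrary homogeneous LND $D$ produce a slice-admitting $D'$ without enlarging the kernel — is the right idea in spirit, but you do not exhibit the construction, and the one you gesture at is problematic: replicas $\chi^m D$ never acquire a slice if $D$ lacks one, and "adding a partial derivative along the $\AA^1$-factor" is precisely the paper's construction $\delta+\delta'$ (a fixed slice-admitting $\delta$ plus an arbitrary homogeneous LND $\delta'$), whose local nilpotency is \emph{not} automatic — sums of LNDs are generally not LND — and is proved in the paper by an induction on the layers $L_i=\{v\in\sigma^\vee\mid\langle v,n_f\rangle=i\}$ parallel to the kernel facet of $\delta$. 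One then checks that $\Ker\delta\cap\Ker(\delta+\delta')$ sits inside the subalgebra of $F'\cap\tau_{\ML^*_h}$ and intersects over all almost saturated facets $F'$ to land in $\ML_h$. Note also a direction slip in your sketch: you ask for $\Ker D\subseteq\Ker D'$, but to conclude $\ML^*\subseteq\Ker D$ you need the slice-admitting derivations' kernels to cut down to $\Ker D$, not contain it. Finally, the structural claim $X\cong X'\times\AA^k$ that you invoke is a \emph{consequence} of the classification of homogeneous LNDs with slice (affine facets and affine rays), not something you can assume at the outset to build the slices.
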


The author is grateful to Ivan Arzhantsev, Nikhilesh Dasgupta and Sergey Gaifullin for useful discussions.

\section{Locally nilpotent derivations}

Algebraic $\mathbb{G}_a$-subgroups of $\Aut(X)$ are in correspondence with locally nilpotent derivations (LND) of the algebra of regular functions~$\KK[X]$. Recall that a linear mapping $\delta\colon\KK[X]\rightarrow\KK[X]$ is called a {\it derivation} of $\KK[X]$, if it satisfies the Leibnits rule  $\delta(fg)=f\delta(g)+\delta(f)g$. A derivation~$\delta$ is called {\it locally nilpotent}, if for each $f\in\KK[X]$ there exists a positive integer number $n$ such, that $\delta^n(f)=0$. More information about LNDs one can find in the book \cite{F}. If we fix an LND~$\delta$ we can consider the following operator on $\KK[X]$, which we call the {\it exponent} of~$\delta$:
$$
\exp(\delta)(f)=f+\delta(f)+\frac{\delta^2(f)}{2!}+\frac{\delta^3(f)}{3!}+\ldots
$$
Since $\delta$ is locally nilpotent, this sum is finite. The operator $\exp(\delta)$ gives an automorphism of $\KK[X]$ and hence an automorphism of the variety $X$. If a function $f$  belongs to the kernel of an LND $\delta$, then the mapping~$f\delta$ is also an LND. It is called a {\it replica} of the derivation~$\delta$. Each LND~$\delta$ corresponds to a $\mathbb{G}_a$-subgroup 
$$\mathcal{H}_\delta=\left\{\exp(t\delta)\mid t\in\KK\right\}\subset\Aut(\KK[X]).$$
Moreover, each algebraic $\mathbb{G}_a$-subgroup corresponds to an LND.

Let us fix a grading of $\KK[X]$  by a commutative group $G$: 
$$\KK[X]=\bigoplus_{g\in G}\KK[X]_g.$$ 
Then a derivation $\delta$ of $\KK[X]$ is called $G$-homogeneous of degree $g_0$, if for every $g\in G$ and a homogeneous element $f\in \KK[X]_g$ we have $\delta(f)\in\KK[X]_{g+g_0}$. If $X$ is an affine toric variety, then the algebra $\KK[X]$ admits a natural grading  of the group of characters of the torus $\mathfrak{X}(T)$. In Section~\ref{lnd} we define $\mathfrak{X}(T)$-homogeneous locally nilpotent derivations, corresponding to so called {\it Demazure roots}. Further this derivations play an important role. If we fix a $\mathbb{Z}$-grading of $\KK[X]$, then every LND $\partial$ can be decomposed onto a sum of homogeneous derivations $\partial=\sum_{i=l}^k\partial_i$. Homogeneous components $\partial_l$ and $\partial_k$ in this sum with the minimal and the maximal degrees  are locally nilpotent, see~\cite[Section~3]{FZ}. If we have $\mathbb{Z}^n$-grading, then each LND can be decomposed onto a sum of homogeneous derivations. The convex hull of degrees of summands is a polyhedron. Derivations corresponding to its vertices are also locally nilpotent. 

\section{Toric varieties}

In this section we give basic facts about toric varieties. More information about toric varieties one can find in books~\cite{CLSch} and~\cite{Ful}. An irreducible algebraic variety is called {\it toric}, if an algebraic torus $T=(\mathbb{K}^\times)^n$ algebraically acts on it with an open orbit. We can assume the action of $T$ on $X$ to be effective. Note that we do not assume toric variety to be normal. Let $X$ be affine. An affine toric variety $X$ corresponds to a finitely generated monoid $P$ of weights of $T$-semiinvariant regular functions. Let us identify the group of characters $\mathfrak{X}(T)$ with a free abelian group $M=\mathbb{Z}^n$. A vector $m\in\mathbb{Z}^n$ with integer coordinates corresponds to the character~$\chi^m$. Since the open orbit on $X$ is isomorphic to $T$, we have an embedding of algebras of regular functions $\mathbb{K}[X]\hookrightarrow\mathbb{K}[T]$. Identifying the algebra $\mathbb{K}[X]$ with its image we obtain the following subalgebra graded by $P$
$$
\mathbb{K}[X]=\bigoplus_{m\in P}\mathbb{K}\chi^m\subset \bigoplus_{m\in M}\mathbb{K}\chi^m=\mathbb{K}[T].
$$
Let us consider the vector space $M_{\mathbb{Q}}=M\otimes_{\mathbb{Z}}\mathbb{Q}$ over the field of rational numbers. The monoid $P$ generates the cone $\sigma^{\vee}=\mathbb{Q}_{\geq0}P\subset M_{\mathbb{Q}}$. Since~$P$ is finitely generated, the cone $\sigma^\vee$ is a finitely generated polyhedral cone. Since the action of~$T$ on $X$ is effective, the cone $\sigma^\vee$ does not belong to any proper subspace of $M_\QQ$. The variety $X$ is normal if and only if the monoid $P$ is {\it saturated}, i.e. $P=M\cap \sigma^{\vee}$. If $P$ is not saturated, then the monoid $P_{sat}=M\cap \sigma^{\vee}$ we call the {\it saturation}  of the monoid $P$. Elements of $P_{sat}\setminus P$ we call {\it holes} of $P$. Let us give some definition according to \cite{TY}.

\begin{definition}
An element $p$ of the monoid $P$ is called {\it saturation point} of $P$, if the moved cone $p+\sigma^{\vee}$ has no holes, i.e. $(p+\sigma^{\vee})\cap M\subset P$. 
 
A face $\tau$ of the cone $\sigma^{\vee}$ is called {\it almost  saturated}, if there is a saturation point of  $P$ in $\tau$. 
Otherwise $\tau$ is called a {\it nowhere saturated} face.
Below, for convenience, face of codimension 1 is called {\it facet}.
\end{definition}

The lattice of one-parameter subgroups of the torus $T$ we denote by~$N$. The lattice $N$ is a dual lattice to $M$. There is a natural pairing
$M\times N\rightarrow \mathbb{Z}$, which we denote $\langle \cdot,\cdot\rangle$. This pairing can be extended to a pairing between vector spaces $N_\QQ=N\otimes_\ZZ\QQ$ and $M_\QQ$. In the space~$N_\QQ$ we define the cone $\sigma$ dual to $\sigma^\vee$, by the rule
$$
\sigma=\{v\in N_\QQ\mid\forall w\in\sigma^\vee : \langle w,v\rangle\geq 0\}.
$$
The finitely generated polyhedral cone $\sigma$ is {\it pointed}, i.e. it does not contain any nontrivial subspaces.

There is a bijection between $k$-dimensional faces of $\sigma$ and $(n-k)$-dimensional faces of~$\sigma^\vee$. A face $\tau\preccurlyeq\sigma$ corresponds to the face  $\widehat{\tau}=\tau^{\bot}\cap\sigma^\vee\preccurlyeq \sigma^\vee$. Also there is a bijection between $(n-k)$-dimensional faces of $\sigma^\vee$ and $k$-dimensional $T$-orbits on $X$. A face $\widehat{\tau}\preccurlyeq \sigma^\vee$ corresponds to the orbit, which is open in the set of zeros of the ideal 
$$I_{\widehat{\tau}}=\bigoplus_{m\in P\setminus\widehat{\tau}}\KK\chi^m.$$
The composition of these bijections gives a bijection between $k$-demensional faces of the cone $\sigma$ and $k$-dimensional $T$-orbits. The orbit corresponding to a face $\tau$, we denote by~$O_\tau$.

\section{The Makar-Limanov invariant}\label{lnd}

In the paper \cite{D} all $M$-homogeneous LNDs  on normal affine toric varieties are described. Consider an extremal ray $\rho$ of $\sigma$. Let us denote by $n_\rho$ the primitive integer vector on $\rho$. The element $e\in M$ is called a {\it Demazure root, corresponding to $\rho$}, if $\langle e, n_\rho \rangle=-1$ and for each other extremal ray $\xi$ of the cone $\sigma$ it is true $\langle e,n_{\xi}\rangle\geq 0$.
The set of all Demazure roots corresponding to an extremal ray~$\rho$ we denote ${\mathscr R}_\rho$. It is easy to see, that for each extremal ray $\rho$ the set
${\mathscr R}_\rho$ is nonempty. Moreover it is infinite.
Each Demazure root $e\in{\mathscr R}_\rho$ corresponds to the LND $\partial_e$ of the algebra $A=\bigoplus_{m\in P_{sat}}\KK\chi^m$ defined on homogeneous elements by the formula
$$
\partial_e(\chi^m)=\langle m,n_\rho\rangle \chi^{m+e}.
$$
The derivation $\partial_e$ is $M$-homogeneous of degree $e$. The kernel of the derivation $\partial_e$ is the subalgebra $\bigoplus_{m\in M\cap \widehat{\rho}}\KK\chi^m$.
If the subalgebra $\KK[X]\subset A$ is $\partial_e$-invariant, then $\partial_e$ induces an LND of the algebra $\KK[X]$, which we denote $\delta_e$. It is easy to see, that the subalgebra $\KK[X]$ is $\partial_e$-invariant if and only if  $(P+e)\cap P_{sat}\subset P$. 

The following statements from \cite{AKZ} and \cite{GB} respectively play a key role in the proofs of the main results of the article. It describes all homogeneous derivations on $K[X]$ and gives a criterium of existing a well defined honogeneous LND $\delta\in{\mathscr R}_\rho$ for a given $\rho$.

\begin{prop}\cite[Proposition 4.4]{AKZ}\label{ak}
\begin{itemize}
\item[{\rm (a)}]
Any homogeneous derivation $\partial \in Der(A)$ has the form $\partial = \lambda \partial_{\rho,e}$ 
for some $\lambda\in\KK$, $\rho\in N$, and $e\in M$ where
$$
\partial_{\rho,e}(\chi^m)=\langle \rho,m \rangle\chi^{m+e} \quad\forall m\in\sigma^\vee\cap M\,.
$$
\item[{\rm (b)}] Let $$\Sigma^\vee=\sigma^\vee\cup\bigcup_{\rho}{\mathscr R}_\rho\,,$$
where $\rho$ runs through all extremal rays of $\sigma$. Then
$\partial_{\rho,e}(A)\subset A$ if and only if $e\in\Sigma^\vee\cap M$ and either $e\in\sigma^\vee$, or  
$e \in \mathscr{R}_{\rho'}$ and $\rho=\rho'$.
\item[{\rm (c)}]
A homogeneous  derivation $\partial\in Der(A)$ is locally nilpotent if and only if $\partial=\lambda\partial_{\rho_i,e}$
 for  a Demazure root $e\in \mathscr{R}_{\rho}$ and  for some $\lambda\in\KK$. 
\end{itemize}
\end{prop}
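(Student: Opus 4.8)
The plan is to establish the three parts in turn, each time reducing to a computation with the monomials $\chi^{m}$ and the face combinatorics of $\sigma^{\vee}$ and the dual cone $\sigma$. For part (a), I would first pass to the ambient torus: since $\sigma^{\vee}$ is full-dimensional the saturated monoid $\sigma^{\vee}\cap M$ generates $M$ as a group, so $\KK[T]=\KK[M]$ is a localization of $A$, and a $\KK$-derivation extends uniquely to a localization. Hence an $M$-homogeneous derivation $\partial$ of $A$ of degree $e$ extends to one of $\KK[T]$ of the same degree; since each $\chi^{m}$ is a unit there, $\partial(\chi^{m})=\phi(m)\,\chi^{m+e}$ for some $\phi\colon M\to\KK$, and the Leibniz rule applied to $\chi^{m}\chi^{m'}$ forces $\phi(m+m')=\phi(m)+\phi(m')$. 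Thus $\phi$ is the pairing with a unique element of $N\otimes_{\ZZ}\KK$, i.e. $\partial=\partial_{\rho,e}$, which gives assertion (a) (with $\rho$ ranging over $N\otimes_{\ZZ}\KK$).

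For part (b), the easy direction is that if $e\in\sigma^{\vee}$ then $m+e\in\sigma^{\vee}$ for every $m\in\sigma^{\vee}$ (a cone is closed under addition), so $\partial_{\rho,e}(A)\subset A$ for any $\rho$. So assume $e\notin\sigma^{\vee}$ and $\partial_{\rho,e}\neq 0$, and fix an extremal ray $\rho'$ of $\sigma$ with $\langle e,n_{\rho'}\rangle<0$. I would then extract three facts, in this order, by testing the inclusion $\partial_{\rho,e}(A)\subset A$ on carefully chosen lattice points. First, on lattice points $m$ in the relative interior of the facet $\widehat{\rho'}=n_{\rho'}^{\perp}\cap\sigma^{\vee}$: there $\langle m+e,n_{\rho'}\rangle<0$, so $\chi^{m+e}\notin A$ and hence $\langle\rho,m\rangle=0$; since such $m$ span $n_{\rho'}^{\perp}$, this forces $\rho$ to be a scalar multiple of $n_{\rho'}$. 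Second, on a lattice point $m\in\sigma^{\vee}\cap M$ with $\langle m,n_{\rho'}\rangle=1$ — one exists because $n_{\rho'}$ is primitive: take any $m$ pairing to $1$ with $n_{\rho'}$ and add a large multiple of a relative-interior lattice point of $\widehat{\rho'}$ to push it into $\sigma^{\vee}$ — for which $\langle\rho,m\rangle\neq 0$, so $m+e\in\sigma^{\vee}$ and therefore $\langle e,n_{\rho'}\rangle=-1$. Third, on relative-interior lattice points of each other facet $\widehat{\xi}$ (where $\langle m,n_{\rho'}\rangle\geq 1$, so again $\langle\rho,m\rangle\neq 0$), which gives $\langle m+e,n_{\xi}\rangle\geq 0$, i.e. $\langle e,n_{\xi}\rangle\geq 0$. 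Together these say exactly that $e\in\mathscr{R}_{\rho'}$ and $\rho$ lies on $\rho'$; the converse follows by reading the same facet inequalities in reverse.

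For part (c), combining (a) and (b), a nonzero homogeneous derivation of $A$ is $\lambda\partial_{\rho,e}$ with either $e\in\mathscr{R}_{\rho'}$ and $\rho$ a scalar multiple of $n_{\rho'}$, or $e\in\sigma^{\vee}$ and $\rho$ arbitrary. In the first case the relation $\langle e,n_{\rho'}\rangle=-1$ gives, by an easy induction, $\partial^{k}(\chi^{m})=\lambda^{k}\langle m,n_{\rho'}\rangle\big(\langle m,n_{\rho'}\rangle-1\big)\cdots\big(\langle m,n_{\rho'}\rangle-k+1\big)\,\chi^{m+ke}$, which vanishes for $k>\langle m,n_{\rho'}\rangle\geq 0$, so $\partial$ is locally nilpotent. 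In the second case I would show $\partial$ is never locally nilpotent unless $\rho=0$: since $e\in\sigma^{\vee}$ one has $m+ke\in\sigma^{\vee}$ for all $k$, hence $\partial^{k}(\chi^{m})=\big(\prod_{j=0}^{k-1}\langle\rho,m+je\rangle\big)\chi^{m+ke}$, and choosing $m$ with $\langle\rho,m\rangle\neq 0$ when $\langle\rho,e\rangle=0$, and $m=e$ (so that $\langle\rho,e+je\rangle=(j+1)\langle\rho,e\rangle\neq 0$) when $\langle\rho,e\rangle\neq 0$, makes this nonzero for every $k$. This yields the stated equivalence.

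The step I expect to be the main obstacle is the lattice-point bookkeeping underlying part (b): one has to know that the relative interior of each facet of $\sigma^{\vee}$ contains lattice points spanning the ambient hyperplane, and that $\sigma^{\vee}\cap M$ contains a point pairing to $1$ with the primitive generator $n_{\rho'}$. Both are consequences of $\sigma^{\vee}$ being a rational polyhedral cone together with the primitivity of the $n_{\rho'}$, but this is precisely where those hypotheses are used and must be set up before the facet computations can run.
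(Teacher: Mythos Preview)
The paper does not supply its own proof of this proposition: it is quoted verbatim from \cite[Proposition~4.4]{AKZ} and used as a black box in the subsequent arguments, so there is nothing in the present paper to compare your attempt against.

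That said, your argument is essentially the standard one and is correct. A couple of minor remarks. In part~(a) you rightly observe that the additive map $\phi\colon M\to\KK$ need only come from $N\otimes_{\ZZ}\KK$ rather than from $N$ itself; the proposition as stated glosses over this, but it is harmless since one may always rescale $\lambda$ and $\rho$ together. In part~(b), your three test families do the job; it may be worth making explicit that the first step already rules out the possibility of two distinct extremal rays $\rho'_1,\rho'_2$ with $\langle e,n_{\rho'_i}\rangle<0$, since then $\rho$ would be proportional to both $n_{\rho'_1}$ and $n_{\rho'_2}$ and hence zero. In part~(c), the case $e\in\sigma^{\vee}$ with $\langle\rho,e\rangle\neq 0$ uses $m=e$, which requires $e\neq 0$; the case $e=0$ is already absorbed into the subcase $\langle\rho,e\rangle=0$, so nothing is missing. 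The lattice facts you flag at the end (relative interiors of rational facets contain spanning lattice points; existence of $m\in\sigma^{\vee}\cap M$ with $\langle m,n_{\rho'}\rangle=1$) are indeed the only places where the rationality and primitivity hypotheses enter, and both are standard.
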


\begin{lemma}\cite[Lemma 4]{GB}\label{eu}
Let $X$ be an affine toric variety, $\sigma$ be the corresponding cone and $\rho$ be an extremal ray of~$\sigma$. Denote by $O_{\rho}$ the corresponding orbit. Then the following conditions are equivalent:

1) the facet $\widehat{\rho}$ of the cone $\sigma^{\vee}$ is almost saturated;

2) there exist a Demazure root $e\in {\mathscr R}_\rho$ of the cone $\sigma$ such, that the corresponding derivation $\delta_e$ of the algebra $\mathbb{K}[X]$ is well defined;

3) the orbit $O_{\rho}$ consists of smooth points.
\end{lemma}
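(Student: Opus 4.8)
The plan is to prove the equivalences $(1)\Leftrightarrow(2)$ and $(1)\Leftrightarrow(3)$ separately: the first is combinatorial, the second runs through the normalization $\nu\colon X_{sat}\to X$, where $X_{sat}=\operatorname{Spec}\KK[P_{sat}]$. The combinatorial engine is the following ``sliding'' observation: if $e\in\mathscr R_\rho$ and $\delta_e$ is well defined, i.e. $(P+e)\cap P_{sat}\subseteq P$, then $p+e\in P$ for every $p\in P$ with $\langle p,n_\rho\rangle\ge1$; indeed $\langle p+e,n_\rho\rangle=\langle p,n_\rho\rangle-1\ge0$ and $\langle p+e,n_\xi\rangle=\langle p,n_\xi\rangle+\langle e,n_\xi\rangle\ge0$ for the remaining extremal rays $\xi$, so $p+e\in P_{sat}$ and hence $p+e\in(P+e)\cap P_{sat}\subseteq P$. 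Iterating, from any $p\in P$ one descends to $p+\langle p,n_\rho\rangle\,e\in P\cap\widehat\rho$ without leaving $P$.

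For $(1)\Rightarrow(2)$, fix a saturation point $p_0\in P\cap\widehat\rho$ and choose $e\in\mathscr R_\rho$ that is ``deep'', i.e. $\langle e,n_\xi\rangle\ge\langle p_0,n_\xi\rangle$ for every extremal ray $\xi\ne\rho$. Such $e$ exists because $e_0+(\widehat\rho\cap M)\subseteq\mathscr R_\rho$ for any fixed $e_0\in\mathscr R_\rho$, and a lattice point $u_0\in\relint(\widehat\rho)$ satisfies $\langle u_0,n_\xi\rangle>0$ for all $\xi\ne\rho$, so $e:=e_0+Nu_0$ works for $N\gg0$. Then for $m\in P$ with $m+e\in P_{sat}$ one checks $m+e-p_0\in\sigma^{\vee}$ — the inequality against $n_\rho$ comes from $m+e\in\sigma^{\vee}$ and $p_0\in\widehat\rho$, those against the other $n_\xi$ from $m\in\sigma^{\vee}$ and the depth of $e$ — so $m+e\in(p_0+\sigma^{\vee})\cap M\subseteq P$, and $\delta_e$ is well defined. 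For $(2)\Rightarrow(1)$, pick a saturation point $q$ of $P$ (one exists because $\KK[P_{sat}]$ is a finite $\KK[P]$-module, so the conductor is nonzero) and descend it to $q':=q+k\,e\in P\cap\widehat\rho$, $k:=\langle q,n_\rho\rangle$. To see $q'$ is a saturation point, take $m\in(q'+\sigma^{\vee})\cap M$ and write $m=q'+w$ with $w\in\sigma^{\vee}\cap M$; then $m-k\,e=q+w\in(q+\sigma^{\vee})\cap M\subseteq P$, and since $\langle q+w+i\,e,n_\rho\rangle=\langle w,n_\rho\rangle+k-i\ge1$ for $0\le i\le k-1$, the sliding observation lets one ascend from $q+w$ back to $m=q+w+k\,e$ inside $P$. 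Hence $\widehat\rho$ contains the saturation point $q'$ and is almost saturated.

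For $(1)\Leftrightarrow(3)$, recall that the non-normal locus of $X$ is $\nu(V(\mathfrak c))$, with $V(\mathfrak c)\subseteq X_{sat}$ the zero locus of the conductor $\mathfrak c$ of $\KK[P]\subseteq\KK[P_{sat}]$. As a $T$-stable monomial ideal, $\mathfrak c=\bigoplus_{u\in C}\KK\chi^{u}$, where $C$ is the nonempty set of saturation points of $P$; so, for a face $F\preccurlyeq\sigma^{\vee}$, the inclusion $\mathfrak c\subseteq\bigoplus_{m\in P_{sat}\setminus F}\KK\chi^{m}$ holds exactly when $F\cap C=\emptyset$, i.e. when $F$ is nowhere saturated. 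Hence $V(\mathfrak c)$ is the union of the orbit closures of $X_{sat}$ attached to the maximal nowhere saturated faces of $\sigma^{\vee}$, and the non-normal locus of $X$ is the corresponding union of orbit closures of $X$. Since $\widehat\rho$ is a facet, the only faces of $\sigma^{\vee}$ containing it are $\widehat\rho$ and $\sigma^{\vee}$, and $\sigma^{\vee}\supseteq C\ne\emptyset$ is not nowhere saturated; as $O_\rho$ is a single $T$-orbit and the non-normal locus is closed and $T$-stable, it follows that $O_\rho$ lies in the non-normal locus of $X$ if and only if $\widehat\rho$ is nowhere saturated, i.e. if and only if $(1)$ fails. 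So if $(1)$ fails, $X$ is not normal, hence not smooth, along $O_\rho$, and $(3)$ fails. If $(1)$ holds, $O_\rho$ lies in the normal locus of $X$, over which $\nu$ is an isomorphism; then $\nu^{-1}(O_\rho)$ is a single $T$-orbit of the normal toric variety $X_{sat}$, of dimension $\dim\widehat\rho=n-1$, and a normal affine toric variety is smooth along each of its codimension-one orbits (its local model there is $(\KK^{\times})^{n-1}\times\AA^{1}$), so $X_{sat}$ — hence $X$, via the isomorphism — is smooth along $O_\rho$, i.e. $(3)$ holds.

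I expect the combinatorial implication $(2)\Rightarrow(1)$ to be the main technical point: the descent-then-ascent verification that $q'=q+\langle q,n_\rho\rangle e$ is again a saturation point requires carefully tracking the $n_\rho$-heights of the intermediate points. The other non-routine ingredient is the orbit/normalization dictionary used for $(1)\Leftrightarrow(3)$ — the toric description of the conductor ideal and the identification of the non-normal locus as the union of orbit closures over the nowhere saturated faces. The sliding observation, the existence of deep Demazure roots, and the smoothness of normal toric varieties along codimension-one orbits are routine.
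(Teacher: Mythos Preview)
The paper does not prove this lemma; it is quoted verbatim from \cite[Lemma 4]{GB} and used as a black box, so there is no in-paper argument to compare against. Your proof is correct and self-contained. The combinatorial equivalence $(1)\Leftrightarrow(2)$ is handled cleanly: the ``deep root'' construction for $(1)\Rightarrow(2)$ and the descent--ascent verification that $q'=q+\langle q,n_\rho\rangle e$ remains a saturation point for $(2)\Rightarrow(1)$ are both sound, and your sliding observation is exactly the mechanism that makes the ascent step work. For $(1)\Leftrightarrow(3)$ the identification of the conductor with the span of $\chi^u$ over saturation points $u$ is correct (since $\chi^u\cdot\KK[P_{sat}]\subseteq\KK[P]$ is precisely the condition $(u+\sigma^\vee)\cap M\subseteq P$), and the passage from ``$O_\rho$ lies in the normal locus'' to ``$O_\rho$ consists of smooth points'' via the local model $(\KK^\times)^{n-1}\times\AA^1$ of a normal toric variety along a codimension-one orbit is standard toric geometry. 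One could streamline $(1)\Leftrightarrow(3)$ slightly by working with the conductor directly as an ideal of $\KK[P]$ rather than routing through $X_{sat}$, but your version is fine as written.
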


\begin{proof}[Proof of Theorem \ref{m}(a)]
It is obvious, that $\ML(X) \subset \ML_h(X)$. We are going to prove the inverse. More specifically, we are going to prove that any LND($\KK[X]$) vanishes on $\ML_h(X)$. Firstly, we prove the following lemmas.

\begin{lemma}\label{int}
$\KK$-algebra $\KK[X]_{norm}$, corresponding to $P_{sat}$, is integral over $\KK[X]$.
\end{lemma}
\begin{proof}
First of all, elements $\chi_i \in \KK[X]_{norm}$, corresponding to primitive integer vectors on extremal rays $r_i$ of $\sigma^{\vee}$, are integer over $\KK[X]$. Indeed, extremal rays of $\sigma^{\vee}$ cannot consist only of holes, because opposite contradict to the fact that $P$ is finitely generated. Thus, for every extremal ray $r_i$ exists $n_i \in \NN$, such that $\chi_i^{n_i} \in P$. On the other hand, for every homogeneous element $\chi$ of $\KK[X]_{norm}$ and some nonnegative $m,m_i, \ldots, m_k$ is true that $\chi^m=\chi_1^{m_1}\ldots\chi_k^{m_k}$. This proves what is needed.
\end{proof}

\begin{lemma}\label{van}
If $S$ is a set of facets of $\sigma^{\vee}$ and $B$ is subalgebra of $\KK[X]_{norm}$, corresponding to the intersection of these facets, then every LND $\delta'$, such that its homogeneous LND-components correspond only to facets from $S$, vanishes on $B$.
\end{lemma}
\begin{proof}
Obviously, it is sufficient to prove it for homogeneous elements of~$B$. Consider the decomposition of $\delta'$ on homogeneous components and an arbitrary homogeneous element $\mu\chi^a \in A$ All homogeneous locally nilpotent components of $\delta'$ vanish on it, and all other homogeneous components map $\mu\chi^a$ in $\mu\lambda\langle \rho,a \rangle\chi^{a+e}$ and we can decompose every such image into the product $(\mu\chi^a)(\lambda\langle \rho,a \rangle\chi^e)$, where the second factor belongs to $\KK[X]_{norm}$ (Proposition \ref{ak}(b)). Thus, $\delta'(\chi^a) = 0 + \ldots + 0 + \mu\chi^a (\lambda_1 \langle \rho_1,a \rangle\chi^{e_1} + \ldots + \lambda_s \langle \rho_s,a \rangle\chi^{e_s})$ is divided by $\mu\chi^a$. Hence, by \cite[Principle 5]{F}, $\delta'(\mu\chi^a) = 0$.
\end{proof}
Let $\delta$ be an LND on $\KK[X]$. Proposition \ref{ak}(a) implies that every homogeneous component of $\delta$ has the form $\lambda \langle \rho,m \rangle\chi^{m+e}$ for some $\lambda\in\KK$, $\rho\in N$, and $e\in M$. Then we can extend our derivation $\delta \in Der(\KK[X])$ to the derivation $\delta' \in Der(\KK[X]_{norm})$, define every homogeneous component of $\delta$ on $P_{sat}\backslash P$ by the above formula. Easy to see, that $\delta'$ is the derivation on $\KK[X]_{norm}$. On the other hand, from Lemma 4.2 and Vasconcelos’s Theorem (see \cite{F}, p. 31), it follows, that $\delta'$ is LND on $\KK[X]_{norm}$. Hence, it is sufficient to prove that every such $\delta'$ (i.e. obtained as the extanding of some $\delta \in$ LND($\KK[X]$)) vanishes on the subalgebra, corresponding to the intersection of all almost saturated facets of $\sigma^{\vee}$ and $P_{sat}$. Indeed, if $S$ is a set of $\sigma^{\vee}$ facets, $B$ is subalgebra of $\KK[X]_{norm}$, corresponding to the intersection of these facets, then every LND $\delta'$, such that its homogeneous LND-components corresponding only to facets from $S$, vanishes on $B$.

Now let $S$ to be set of all almost saturated facets of cone $\sigma^{\vee}$. For every $\delta' \in$ LND($\KK[X]_{norm}$), obtained as extanding of some $\delta \in$ LND($\KK[X]$), is true that locally nilpotent homogeneous components of~$\delta'$ corresponds to facets from $S$. It follows from fact that every homogeneous locally nilpotent components of $\delta'$ corresponds to homogeneous locally nilpotent components of $\delta$, for which it is true from Lemma 4.2. Hence, every such $\delta'$ vanishes on $B$ and, therefore, every original derivation $\delta$ from LND($\KK[X]$) vanishes on $B \cap \KK[X] = \ML_h(X)$.
\end{proof} 

\section{The modified Makar-Limanov invariant}

In this section we prove that $\ML(X) = \ML^*(X)$ for an affine toric variety $X$. Also we give combinatorial description of homogeneous LND with slice and prove some corollaries that follow from it.

\begin{lemma}
Let $X$ be an affine toric variety. Then it admits $\LND$ with a slice if and only if it admits homogeneous $\LND$ with homogeneous slice.
\end{lemma}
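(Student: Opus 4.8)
The ``if'' direction is trivial, so I would concentrate on the converse. Suppose $\KK[X]$ carries an $\LND$ $D$ with a slice $s$, $Ds=1$. The plan is first to decompose both with respect to the natural $\mathfrak{X}(T)=M$-grading, writing $D=\sum_{e}D_{e}$ and $s=\sum_{a}c_{a}\chi^{a}$ (finite sums, $\supp(s)\subseteq P$); after subtracting the constant term of $s$, which lies in $\Ker D$, we may assume $0\notin\supp(s)$. For every vertex $e^{*}$ of the Newton polytope $Q:=\mathrm{conv}\{e:D_{e}\neq0\}$ the component $D_{e^{*}}$ is again locally nilpotent, and, extending to $\KK[X]_{norm}$ as in the proof of Theorem~\ref{m}(a) and applying Proposition~\ref{ak}, it has the form $D_{e^{*}}=\lambda\,\partial_{\rho,e^{*}}$ with $\lambda\in\KK^{\times}$ and $e^{*}\in\mathscr{R}_{\rho}$ a Demazure root of an extremal ray $\rho$ of $\sigma$. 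Since then $\langle -e^{*},n_{\rho}\rangle=1$, one gets $D_{e^{*}}(\chi^{-e^{*}})=\lambda\,\chi^{0}$ as soon as $\chi^{-e^{*}}\in\KK[X]$. Thus the whole statement reduces to showing that \emph{the existence of the slice $s$ forces $Q$ to have a vertex $e^{*}$ with $-e^{*}\in P$}: for then $\lambda^{-1}\chi^{-e^{*}}$ is a homogeneous slice of the homogeneous $\LND$ $D_{e^{*}}$.

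To produce such a vertex I would use the identity $Ds=1$ directly. Expanding $Ds=\sum_{e,a}c_{a}\lambda_{e}\langle\rho_{e},a\rangle\chi^{e+a}$, every exponent occurring in $Ds$ lies in the Minkowski sum $P_{+}:=Q+\mathrm{conv}(\supp s)$, and reading off the coefficient of $\chi^{0}$ shows $0\in P_{+}$. If $0$ happens to be a \emph{vertex} of $P_{+}$ we are done immediately: a vertex of a Minkowski sum of polytopes is a sum of vertices of the summands, so $0=e^{*}+a^{*}$ with $e^{*}$ a vertex of $Q$ and $a^{*}$ a vertex of $\mathrm{conv}(\supp s)$, hence $a^{*}\in\supp(s)\subseteq P$ and $-e^{*}=a^{*}\in P$. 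So the remaining task is to rule out the possibility that $0$ is not a vertex of $P_{+}$.

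For this I would run a minimality/initial-form argument. First, for any linear form $v$ the lowest-$v$-degree component of $Ds=1$ is exactly $D_{v}^{\min}(s_{v}^{\min})$, the product of the lowest-$v$-degree parts of $D$ and of $s$; as $1$ sits in $v$-degree $0$, this component equals $\chi^{0}$ when $0$ lies in the $v$-minimal face of $P_{+}$ and vanishes otherwise. Now choose $(D,s)$ so that $\#\supp(D)+\#\supp(s)$ is minimal among all $\LND$s of $\KK[X]$ admitting a slice, and suppose $0$ is not a vertex of $P_{+}$. Taking $v$ in the relative interior of the normal cone of the smallest face $F_{0}$ of $P_{+}$ containing $0$, the observation above gives $D_{v}^{\min}(s_{v}^{\min})=1$, so $(D_{v}^{\min},s_{v}^{\min})$ is again an $\LND$ with a slice ($D_{v}^{\min}$ is locally nilpotent as an extremal homogeneous component) whose supports are contained in those of $(D,s)$; minimality forces equality of supports, hence $F_{0}=P_{+}$, i.e.\ $0\in\relint(P_{+})$. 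In that case one checks that $\supp(D)$ must contain a Demazure root for \emph{every} extremal ray $\xi$ of $\sigma$: indeed $\langle\cdot,n_{\xi}\rangle\ge 0$ on $\mathrm{conv}(\supp s)\subseteq\sigma^{\vee}$ and $\langle\cdot,n_{\xi}\rangle\ge -1$ on $Q$ (using Proposition~\ref{ak}(b) for the possible degrees of $D_{e}$), with negative values on $Q$ occurring only at Demazure roots for $\xi$, whereas $0\in\relint(P_{+})$ forces $\langle\cdot,n_{\xi}\rangle$ to be negative somewhere on $P_{+}$. The plan is then to exclude this configuration using the combinatorial description of homogeneous $\LND$s with homogeneous slice proved below, i.e.\ by analysing which families of Demazure roots can be the homogeneous support of one $\LND$ that admits a slice. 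I expect this final step — showing $0\in\relint(P_{+})$ is impossible, informally that the Newton polytope of an $\LND$ with a slice cannot surround the origin — to be the main obstacle; the rest is routine bookkeeping with the gradings together with the structural statements of Proposition~\ref{ak} and Lemma~\ref{eu}.
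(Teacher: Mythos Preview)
Your approach is substantially more elaborate than needed, and the gap you flag at the end is genuine: proving that $0$ cannot lie in $\relint(P_{+})$ is not easier than the lemma itself, and invoking the later combinatorial classification of homogeneous $\LND$s with slice to finish would invert the logical order of the section. The missing observation is that you do \emph{not} need $e^{*}$ to be a vertex of $Q$ at all. The paper's argument is essentially one line: from $Ds=\sum_{e,a}D_{e}(c_{a}\chi^{a})=1$ pick any pair $(e,a)$ with $D_{e}(c_{a}\chi^{a})\in\KK\setminus\{0\}$; then $e=-a$ with $a\in P\setminus\{0\}$. Proposition~\ref{ak}(b) says the degree of any homogeneous component of a derivation of $A=\KK[X]_{norm}$ lies either in $\sigma^{\vee}$ or among the Demazure roots; here $e=-a\notin\sigma^{\vee}$ (if it were, $a$ would lie in the lineality space of $\sigma^{\vee}$, so $\chi^{a}$ would be a unit in $A$ and be killed by the extended $\LND$, forcing each $D_{e}(\chi^{a})=0$). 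Hence $e\in\mathscr{R}_{\rho}$ for some $\rho$, and by Proposition~\ref{ak}(c) the component $D_{e}$ is already locally nilpotent, with $c_{a}\chi^{a}$ a homogeneous slice (up to scalar).

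In other words, the very fact that $e+a=0$ with $a\in P\setminus\{0\}$ forces $e$ out of $\sigma^{\vee}$ and hence forces $D_{e}$ to be an $\LND$, without any appeal to extremality in the Newton polytope. Your Minkowski-sum and minimality machinery is correct as far as it goes, but it is solving a harder problem (finding a \emph{vertex} $e^{*}$ with $-e^{*}\in P$) than the one actually required.
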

\begin{proof}
Let $X$ admits LND with slice $\delta$ and $s$ be the corresponding slice. Consider the decomposition of $\delta$ and $s$ into homogeneous components:
$$
\delta(s) = \sum_{i,j} \delta_i(s_j) = 1.
$$

At least one of $\delta_i(s_j) \in \KK \backslash \{0\}$. From Proposition 4.1 (b) it follows that all such $\delta_i$ is LND. Opposite is obvious.
\end{proof}

Now let us study the structure of the cone $\sigma^{\vee}$ if the corresponding variety admits a homogeneous LND with slice. First, introduce several new definitions.

\begin{definition}
A facet $F$ of a cone $\sigma^{\vee}$ is called {\it affine}, if $F$ is saturated and the intersection of all other facets of $\sigma^{\vee}$ has dimension 1.
\end{definition}

\begin{ex}
Let us consider a cone $\sigma^{\vee} = \{ \QQ_{\geq 0}(1,0) + \QQ_{\geq 0}(1,2)\}$ and monoid $P$, which is the intersection of $\sigma^{\vee}$ and $M = \ZZ^2$. Both of one-dimensional facets $F_1$ and $F_2$ of cone $\sigma^{\vee}$ are affine, obviously.
$$
\begin{picture}(120,70)
\put(110,60){$\sigma^{\vee}$}
\put(20,0){\vector(1,0){100}}
\put(20,0){\vector(1,2){35}}
\put(20,0){\circle*{4}}
\put(40,0){\circle*{4}}
\put(40,20){\circle*{4}}
\put(40,40){\circle*{4}}
\put(60,0){\circle*{4}}
\put(60,20){\circle*{4}}
\put(60,40){\circle*{4}}
\put(60,60){\circle*{4}}
\put(80,0){\circle*{4}}
\put(80,20){\circle*{4}}
\put(80,40){\circle*{4}}
\put(80,60){\circle*{4}}
\put(100,0){\circle*{4}}
\put(100,20){\circle*{4}}
\put(100,40){\circle*{4}}
\put(100,60){\circle*{4}}
\put(35,65){$F_2$}
\put(120,5){$F_1$}
\end{picture}
$$
\end{ex}

\begin{ex}
Let us also consider the monoid $P$, consisting of all integer points $(a,b)$ such, that $a \geq 0$, $b \geq 0$, except all points (k, 1). Facet $F_1$ is not affine (because it is not saturated) and $F_2$ is affine.
$$
\begin{picture}(120,70)
\put(110,60){$\sigma^{\vee}$}
\put(20,0){\vector(1,0){100}}
\put(20,0){\vector(0,1){75}}
\put(20,0){\circle*{4}}
\put(20,20){\circle{4}}
\put(20,40){\circle*{4}}
\put(20,60){\circle*{4}}
\put(40,0){\circle*{4}}
\put(40,20){\circle{4}}
\put(40,40){\circle*{4}}
\put(40,60){\circle*{4}}
\put(60,0){\circle*{4}}
\put(60,20){\circle{4}}
\put(60,40){\circle*{4}}
\put(60,60){\circle*{4}}
\put(80,0){\circle*{4}}
\put(80,20){\circle{4}}
\put(80,40){\circle*{4}}
\put(80,60){\circle*{4}}
\put(100,0){\circle*{4}}
\put(100,20){\circle{4}}
\put(100,40){\circle*{4}}
\put(100,60){\circle*{4}}
\put(0,65){$F_2$}
\put(120,5){$F_1$}
\end{picture}
$$
\end{ex}

\begin{ex}
Example of saturated, but not affine facets can be obtained from 
consideration the cone
$$
\sigma^{\vee} = \{ \QQ_{\geq 0}(1,1,1) + 
\QQ_{\geq 0}(1,1,-1) + \QQ_{\geq 0}(1,-1,1) + \QQ_{\geq 
0}(1,-1,-1)\}
$$ in $3$-dimensional space ("infinite" tetrahedral 
pyramid) and monoid~$P$, which is the intersection of 
$\sigma^{\vee}$ and $M = \ZZ^3$. Easy to see, that every facet of 
$\sigma^{\vee}$ is saturated, but the intersection of any 3 facets 
has dimension $0$. It is equal to the point $(0, 0, 0)$.
\end{ex}

\begin{lemma}
Let $\delta$ be a homogeneous LND with slice for which the subalgebra corresponding to the intersection of the given facet $F$ and the lattice $M$ is the kernel. Then $F$ is affine and for the Demazure root $e$ corresponding to LND $\delta$ the vector $-e$ lies in the intersection of all other facets.
\end{lemma}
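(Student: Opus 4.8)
The plan is to show first that a homogeneous slice of $\delta$ must be a monomial $\chi^{-e}$, and then to read off the shape of the cone $\sigma^{\vee}$ from this. Replacing $\delta$ by a scalar multiple, we may assume $\delta=\delta_e$ for a Demazure root $e\in{\mathscr R}_{\rho}$ attached to an extremal ray $\rho$ of $\sigma$ (Proposition~\ref{ak}(c)), so that $\Ker\delta=\bigoplus_{m\in P\cap\widehat{\rho}}\KK\chi^m$; comparing with the hypothesis identifies $F$ with $\widehat{\rho}$ and gives $M\cap F\subseteq P$, i.e.\ $F$ is saturated. If $s$ is a slice, expand $\delta(s)=1$ into $M$-homogeneous components; since $\delta$ raises $M$-degree by $e$, the weight-$(-e)$ component of $s$ must be nonzero, hence $-e\in P$, and then automatically $\chi^{-e}\in\KK[X]$ with $\delta(\chi^{-e})=\langle -e,n_{\rho}\rangle\chi^{0}=1$, so $\chi^{-e}$ is itself a slice. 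In particular $-e\in P\subseteq\sigma^{\vee}$.

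Next I would check that $-e$ lies on every facet of $\sigma^{\vee}$ other than $F$. Let $\xi\ne\rho$ be an extremal ray of $\sigma$. Since $e$ is a Demazure root attached to $\rho$ we have $\langle e,n_{\xi}\rangle\ge 0$, hence $\langle -e,n_{\xi}\rangle\le 0$; and $-e\in\sigma^{\vee}$ forces $\langle -e,n_{\xi}\rangle\ge 0$. Therefore $\langle -e,n_{\xi}\rangle=0$, that is, $-e\in\widehat{\xi}=n_{\xi}^{\bot}\cap\sigma^{\vee}$. Writing $\tau$ for the intersection of all facets of $\sigma^{\vee}$ distinct from $F$, this is exactly the second assertion of the lemma; and $-e\ne 0$ because $\langle -e,n_{\rho}\rangle=1$, so $\dim\tau\ge 1$.

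It remains to prove $\dim\tau\le 1$, which will yield affineness of $F$. For $p\in P$ set $k=\langle p,n_{\rho}\rangle\ge 0$; since $\langle e,n_{\rho}\rangle=-1$ one computes $\delta^{k}(\chi^{p})=k!\,\chi^{\,p+ke}\in\KK[X]$ while $\delta^{k+1}(\chi^{p})=0$, whence $p+ke\in P\cap\widehat{\rho}=M\cap F$; thus $P=(M\cap F)+\ZZ_{\ge 0}(-e)$ (the reverse inclusion uses $-e\in P$ and $M\cap F\subseteq P$), and passing to generated cones $\sigma^{\vee}=F+\QQ_{\ge 0}(-e)$. (Equivalently, the Slice Theorem, see \cite{F}, gives $\KK[X]=(\Ker\delta)[\chi^{-e}]$ directly.) Now take $w\in\tau$ and write $w=f-te$ with $f\in F$, $t\ge 0$; pairing with $n_{\rho}$ forces $t=\langle w,n_{\rho}\rangle$, so $f=w+te$. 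Pairing $f$ with the primitive generator $n_{\xi}$ of an arbitrary extremal ray $\xi$ of $\sigma$ gives $\langle f,n_{\rho}\rangle=0$ and, for $\xi\ne\rho$, $\langle f,n_{\xi}\rangle=\langle w,n_{\xi}\rangle+t\langle e,n_{\xi}\rangle=0$, using $w\in\widehat{\xi}$ and $\langle e,n_{\xi}\rangle=0$ from the previous paragraph. Since these generators span $N_{\QQ}$ (here one uses that $\sigma$ is full-dimensional, equivalently that $\sigma^{\vee}$ is pointed), we get $f=0$ and $w\in\QQ_{\ge 0}(-e)$. Hence $\tau=\QQ_{\ge 0}(-e)$ is one-dimensional; together with the saturation of $F$, this says precisely that $F$ is affine.

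I expect the main obstacle to be the passage from the slice to the decomposition $\sigma^{\vee}=F+\QQ_{\ge 0}(-e)$, together with the bookkeeping around the standing hypothesis that $\sigma$ spans $N_{\QQ}$ (without which the final dimension count can fail); the remaining steps are routine manipulations of the defining inequalities of Demazure roots and of dual cones.
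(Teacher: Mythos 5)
Your proof is correct, and its core coincides with the paper's: both arguments extract from the slice equation that $-e\in\sigma^{\vee}$, combine $\langle -e,n_{\xi}\rangle\ge 0$ with the Demazure condition $\langle e,n_{\xi}\rangle\ge 0$ to conclude $\langle e,n_{\xi}\rangle=0$ for every extremal ray $\xi\ne\rho$, and deduce that $-e$ lies in every facet other than $F$. You are more careful than the paper on two points. First, you actually justify $-e\in P$ by isolating the weight-$(-e)$ homogeneous component of an arbitrary slice, whereas the paper simply writes $\delta(\chi^m)=1$ and asserts that this forces $-e$ to lie in $\sigma^{\vee}$. Second, for the bound $\dim\tau\le 1$ the paper argues abstractly (if the other facets met in dimension $\ge 2$, then all facets would meet in dimension $\ge 1$, contradicting pointedness of $\sigma^{\vee}$), while you first derive the structural decomposition $P=(M\cap F)+\ZZ_{\ge 0}(-e)$, hence $\sigma^{\vee}=F+\QQ_{\ge 0}(-e)$, and then compute directly; this is longer but delivers, as a byproduct, exactly the product structure of the monoid that the paper invokes later (the remark following Proposition 5.3 and Corollary \ref{form}). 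A shorter variant of your ending is available: $\tau\subseteq\bigcap_{\xi\ne\rho}n_{\xi}^{\perp}$, and since the vectors $n_{\xi}$ together with $n_{\rho}$ span $N_{\QQ}$, that intersection is at most one-dimensional. Finally, you are right to flag the standing assumption that $\sigma^{\vee}$ is pointed (no torus factor): the paper's proof uses it in exactly the same place without stating it explicitly.
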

\begin{proof}
Since there is LND for which this facet $F$ is the kernel, then $F$ is saturated. An equality $\delta(\chi^m) = \langle e, n_{\rho} \rangle \chi^{m+e} = 1$, where $\rho$ is the ray in $\sigma$ corresponding to F, holds if and only if $-e$ lies inside the cone~$\sigma^{\vee}$, 
Therefore we have $\langle -e, n_{\rho_i} \rangle \geq 0$, where $n_{\rho_i}$ 
are primitive integer vectors on extremal rays $\rho_i$ of the cone $\sigma$. On the other hand, by the
definition of the Demazure root, $\langle e, n_{\rho_i} \rangle \geq 0$ for all $\rho_i$
, except the one corresponding to the given facet $F$. Hence we get
that $\langle e, n_{\rho_i} \rangle = 0$ for all $\rho_i$ except the one corresponding to the given facet.
Therefore,~$-e$ must lie in all faces except the given one. Thus their intersection must have dimension $\geq 1$. On the other hand, their intersection have dimension $\leq 1$. Indeed, if the intersection of all other facets have dimension $\geq 2$, then intersection of all facets have a dimension $\geq 1$, which contradict to the assumption that cone $\sigma^{\vee}$ is pointed.
\end{proof}

This statement is equivalent to the fact that all extremal rays of the~$\sigma^{\vee}$, except one, belongs to F.

Consider now an affine facet F of $\sigma^{\vee}$, corresponding to the extremal ray $\rho$ in $\sigma$. Denote by $\tau$ the only extremal ray not belonging to it.

\begin{definition}
Extremal ray $\tau \in \sigma^{\vee}$, corresponding to affine facet $F$ is called {\it affine}, if its primitive integer vector $r$ lies in $P$, satisfies the condition $\langle r, n_{\rho} \rangle = 1$, where $n_{\rho}$ is primitive integer vector on the extremal ray of $\sigma$, corresponding to $F$, and for every hole $h$ in $F$ and $k \in \NN$ points $h + kr $ is also a hole.
\end{definition}

\begin{ex}
Although both facets in Example 1 are affine, the rays 
corresponding to them are not affine, because their primitive vectors are not "closest" integer vectors to these facets - $\langle (1,0), n_{\rho_2} \rangle = 2$, $\langle (1,2), n_{\rho_1} \rangle = 2$, where $n_{\rho_1} = (0, 1)$ and $n_{\rho_2} = (2, -1)$, corresponding to $F_1$ and $F_2$ respectively. On the contrary, in Example 2, a ray, corresponding to facets $F_2$, is affine.
\end{ex}

\begin{ex}
Let us take the monoid $P$ from Example 2 and remove the point $(0, 2)$. Facet $F_2$ is affine, primitive vector $(1,0)$ of the corresponding ray $F_1$ belongs to $P$, satisfies condition  $\langle (1,0), n_{\rho_1} \rangle = 1$, where $n_{\rho_1} = (1,0)$, but for hole $h = (0,2)$ exist infinitely many $k \in \NN$ such that $h + k(1,0)$ is not a hole. Hence, in this case, a ray $F_1$, considered as a ray corresponding to the affine facet $F_2$, is not affine.
$$
\begin{picture}(120,70)
\put(110,60){$\sigma^{\vee}$}
\put(20,0){\vector(1,0){100}}
\put(20,0){\vector(0,1){75}}
\put(20,0){\circle*{4}}
\put(20,20){\circle{4}}
\put(20,40){\circle{4}}
\put(20,60){\circle*{4}}
\put(40,0){\circle*{4}}
\put(40,20){\circle{4}}
\put(40,40){\circle*{4}}
\put(40,60){\circle*{4}}
\put(60,0){\circle*{4}}
\put(60,20){\circle{4}}
\put(60,40){\circle*{4}}
\put(60,60){\circle*{4}}
\put(80,0){\circle*{4}}
\put(80,20){\circle{4}}
\put(80,40){\circle*{4}}
\put(80,60){\circle*{4}}
\put(100,0){\circle*{4}}
\put(100,20){\circle{4}}
\put(100,40){\circle*{4}}
\put(100,60){\circle*{4}}
\put(0,65){$F_2$}
\put(120,5){$F_1$}
\end{picture}
$$
\end{ex}

\begin{prop}
The facet $F$ admits a homogeneous LND with slice if
and only if it is affine and the corresponding ray $\tau$ is affine.
\end{prop}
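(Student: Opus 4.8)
The plan is to reduce everything to the distinguished derivations $\delta_e$. By Proposition~\ref{ak}(c) a homogeneous $\LND$ of $\KK[X]$ is, up to a nonzero scalar, $\delta_e$ for a Demazure root $e\in{\mathscr R}_\rho$, and its kernel corresponds to the facet $\widehat\rho$; so, writing $\rho$ for the ray of $\sigma$ with $\widehat\rho=F$, the statement ``$F$ admits a homogeneous $\LND$ with slice'' means exactly that some $e\in{\mathscr R}_\rho$ has $\delta_e$ well defined on $\KK[X]$ and admitting a slice. I would use two bookkeeping facts: $\delta_e$ is well defined on $\KK[X]$ iff $(P+e)\cap P_{sat}\subseteq P$; and, once it is, $\delta_e$ has a slice iff $-e\in P$ (the slice can be taken homogeneous of degree $-e$ since $\delta_e$ has degree $e$ and $1=\chi^0$ has degree $0$, and then $\delta_e(\chi^{-e})=\langle -e,n_\rho\rangle\chi^0=\chi^0$ because $\langle e,n_\rho\rangle=-1$).

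For the forward implication I would start from such a $\delta_e$ and invoke the lemma proved above (a homogeneous $\LND$ with slice and kernel corresponding to $F$ forces $F$ to be affine and $-e$ to lie in the intersection of all the other facets) — that intersection being the ray $\tau$. Writing $-e=cr$ with $r$ the primitive vector of $\tau$ and $c$ a positive integer, the identity $\langle e,n_\rho\rangle=-1$ gives $c\langle r,n_\rho\rangle=1$; since $r\notin F$ forces $\langle r,n_\rho\rangle\geq1$, this pins down $c=\langle r,n_\rho\rangle=1$, so $r=-e\in P$ (the slice condition) and $\langle r,n_\rho\rangle=1$. For the remaining condition in the definition of an affine ray I would argue that translation by $r$ preserves holes: if $h\in P_{sat}\setminus P$ then $h+r\in M\cap\sigma^\vee=P_{sat}$, and if $h+r$ were in $P$ then $h=(h+r)-r\in(P+e)\cap P_{sat}$, so well-definedness of $\delta_e$ would give $h\in P$, a contradiction; inductively $h+kr$ is a hole for all $k\in\NN$, in particular whenever $h$ is a hole in $F$. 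Hence $\tau$ is affine.

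For the converse, given that $F$ and $\tau$ are affine, with $r\in P$ and $\langle r,n_\rho\rangle=1$, I would set $e=-r$. Then $\langle e,n_\rho\rangle=-1$, and for each other ray $\xi$ of $\sigma$ one has $\langle e,n_\xi\rangle=-\langle r,n_\xi\rangle=0\geq0$ since $\tau\subseteq\widehat\xi=\xi^{\perp}\cap\sigma^\vee$, so $e\in{\mathscr R}_\rho$; moreover $\chi^r$ becomes a slice as soon as $\delta_e$ is well defined, because $\delta_e(\chi^r)=\langle r,n_\rho\rangle\chi^0=1$ and $r\in P$. The one thing left to check is $(P+e)\cap P_{sat}\subseteq P$. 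Here I would use the decomposition coming from $\langle r,n_\rho\rangle=1$ and $\langle r,n_\xi\rangle=0$ ($\xi\neq\rho$): every $m\in M\cap\sigma^\vee$ is $m=h+tr$ with $t=\langle m,n_\rho\rangle\in\ZZ_{\geq0}$ and $h=m-tr\in M\cap F$, and $h\in P$ implies $m\in P$ (as $r\in P$). So if $m\in P_{sat}$, $m+r\in P$, $m\notin P$, then $h\notin P$ (using $tr\in P$), hence $h$ is a hole in $F$, so by affineness of $\tau$ the point $h+(t+1)r=m+r$ is a hole — contradicting $m+r\in P$.

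I expect the heart of the matter, and the only real obstacle, to be this last interaction: the affine-ray hypothesis controls only holes that lie on the facet $F$, and one must see that pushing by $r$ spreads ``being a hole'' over all of $P$ (via $m=h+tr$), which is precisely what makes the combinatorial affineness of $\tau$ equivalent to the ring-theoretic well-definedness condition $(P+e)\cap P_{sat}\subseteq P$ for $\delta_e$. The secondary point to watch is the integrality argument in the forward direction, where $\langle r,n_\rho\rangle=1$ must be deduced rather than assumed.
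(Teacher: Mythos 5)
Your proposal is correct and follows essentially the same route as the paper: Lemma 5.2 for the forward direction, and for the converse the explicit derivation $\delta_{-r}$ whose well-definedness is checked by decomposing $m=h+tr$ with $h\in F$ and running the hole/non-hole dichotomy on $h$. Your bookkeeping is in fact a bit tighter than the paper's in one spot — you get $-e\in P$ directly from the homogeneous slice condition, whereas the paper argues it more loosely by walking along the ray $\tau$ using finite generation of $P$.
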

\begin{proof}
Suppose that $F$ admits homogeneous LND with slice $\delta$. Then from Lemma 5.2 it follows, that $F$ is affine. Consider the corresponding to $F$ ray $\tau$. In Lemma 5.2 we obtain that for Demazure root $e$, corresponding to $\delta$, is true that $-e$ belongs to the ray $\tau$. It is true that this integer vector  satisfies the condition $\langle -e, n_{\rho} \rangle = 1$ and, therefore, is primitive. It follows from the definition of Demazure root. So we need to proof that $-e \in P$ and that for every hole $h \in F$ and $k \in \NN$ is true that $h + k(-e)$ is also a hole.

Firstly, assume that $-e \not\in P$. From the another hand, on the ray $\tau$ there exist nonzero points, which belong to $P$, opposite contradicts to fact that $P$ is finitely generated. In terms of monoid $P$ derivation $\delta$ shift these points on a vector $e$. Therefore, in a finite number of steps we reach the point $-e$ (since $-e$ is a primitive vector, then all points on the ray $\tau$ that belong to $P$ have the form $k(-e)$ for some $k \in \NN$). Since the derivation $\delta$ is well-defined, then $-e$ must belong to $P$.

For the same reasons for every hole $h \in F$ and $k \in \NN$ is true that $h + k(-e)$ is also a hole (because otherwise in a finite number of shifts from some nonhole point we reach $h$, which is a contradiction to the fact that the derivation $\delta$ is well-defined).

Now let suppose that $F$ is affine and corresponding ray $\tau$ is also affine. Denote by $r$ the primitive integer vector on the ray $\tau$. We claim that the derivation given by the formula 
\begin{equation}
\delta(\mu\chi^m) = \mu \langle m, n_{\rho} \rangle \chi^{m-r} 
\end{equation}
(where $n_{\rho}$ is the primitive integer vector on the extremal ray from cone~$\sigma$, 
which corresponds to $F$) for every homogeneous element $\mu \chi^m$ from $\KK[X]$ is an LND with slice and that it is 
well defined. 

In terms of monoid $P$ derivation $\delta$ is well-defined if and only if shift from any point lying in $P \backslash (P \cap F)$ gives us a point from $P$. Assume that for some $p \in P$ the point $p-r$ is a hole. But for some $k' \in \NN$ the point $p - k'r  = f \in F$. On the one hand, if $f \not \in P$, i.e. is a hole, then all point of the form $f + kr$ for every $k \in \NN$ are holes. It follows from the definition of an affine vector. But this is a contradiction with the fact that $p \in P$. On the other hand, if $f \in P$, then all point of the form $f + kr$ for every $k \in 
\NN$ also belong to $P$, but this is contradiction with the fact that $p-r \not \in P$. Hence, formula (1) gives us a well-defined derivation.

Easy to see that it is LND corresponding to Demazure root $-r$. It is obvious, that $\chi^r$ is slice for this derivation. So we prove that $\delta$ is well-defined.

\end{proof}

In fact, Proposition 5.3 states that the monoid is the product of the monoid corresponding to the intersection $F \cap P$ and all integer points of the ray $\tau$. Points belonging to $P$ generate series of points also belonging to $P \cap F$, and holes belonging to $F$ generate series of holes.

\begin{re}
It follows from Lemma 5.1 and Proposition 5.3 that such a structure of the monoid $P$ is also sufficient condition for the algebra $\KK[X]$ to admit a $\LND$ with slice.
\end{re}

\begin{lemma}\label{lind}
Any nonzero vector from the fixed affine ray $\tau$ cannot be expressed as a linear combination of vectors from other affine rays, i.e all affine rays are linearly independent. 
\end{lemma}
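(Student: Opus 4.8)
The plan is to turn the combinatorial content of the definition of an affine facet into an explicit non-degenerate pairing; once that is set up, the linear independence is immediate. It clearly suffices to show that if $\tau_1,\dots,\tau_k$ are pairwise distinct affine rays of $\sigma^\vee$ with primitive integer vectors $r_1,\dots,r_k$, then $r_1,\dots,r_k$ are linearly independent over $\QQ$ (this is precisely the ``$i.e.$'' reformulation in the statement, since a linear combination of vectors lying on the rays $\tau_2,\dots,\tau_k$ is, after grouping by ray, of the form $\sum_{i\geq 2}c_ir_i$). For each $i$ I would fix an affine facet $F_i$ of $\sigma^\vee$ whose unique extremal ray not contained in $F_i$ is $\tau_i$, write $F_i=\widehat{\rho_i}=\rho_i^{\bot}\cap\sigma^\vee$ for the extremal ray $\rho_i$ of $\sigma$ corresponding to $F_i$, and let $n_{\rho_i}$ be the primitive vector on $\rho_i$.

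The first step is to compute the pairings $\langle r_j,n_{\rho_i}\rangle$. For $j\neq i$, the ray $\tau_j$ is an extremal ray of $\sigma^\vee$ distinct from $\tau_i$, hence contained in $F_i$ by the reformulation of Lemma~5.2 recorded above, so $\langle r_j,n_{\rho_i}\rangle=0$. For $i=j$, since $r_i\in\sigma^\vee$ and $n_{\rho_i}\in\sigma$ we have $\langle r_i,n_{\rho_i}\rangle\geq 0$, and the value $0$ is impossible, because it would put $r_i$ into $\rho_i^{\bot}\cap\sigma^\vee=F_i$, contradicting $\tau_i\not\subseteq F_i$; indeed the definition of an affine ray even gives $\langle r_i,n_{\rho_i}\rangle=1$, but mere positivity is all I need. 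Thus the $k\times k$ matrix with $(i,j)$-entry $\langle r_j,n_{\rho_i}\rangle$ is diagonal with nonzero diagonal entries.

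The second step is then a one-line deduction: given a relation $\sum_j c_jr_j=0$ with $c_j\in\QQ$, pairing it with $n_{\rho_i}$ leaves only the diagonal term and yields $c_i\langle r_i,n_{\rho_i}\rangle=0$, hence $c_i=0$ for every $i$. I do not expect a genuine obstacle here; the only point that demands a little care is the input that, for an affine facet $F_i$, every extremal ray of $\sigma^\vee$ other than $\tau_i$ lies in $F_i$ — which is exactly the equivalent form of Lemma~5.2 already established — together with the observation that distinctness of the $\tau_j$ as rays is what allows this to be applied to each $\tau_j$ with $j\neq i$.
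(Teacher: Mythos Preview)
Your proof is correct and follows essentially the same approach as the paper. The paper fixes one affine ray $\tau$ with its affine facet $F$, notes that all other affine rays lie in the supporting hyperplane $H\supseteq F$, and concludes from $H\cap\tau=\{0\}$; your pairing $\langle\,\cdot\,,n_{\rho_i}\rangle$ is just the linear functional cutting out that hyperplane, so your diagonal-matrix argument is the same observation written coordinatewise.
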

\begin{proof}
All affine rays, except $\tau$, belong to the corresponding affine facet~$F$. Hence all linear combinations of vectors from these rays lie in hyperplane $H$, containing $F$. We have $H \cap \tau = 0$, which gives us the required.
\end{proof}

\begin{corollary}\label{nrays}
There are at most $n$ distinct affine rays.
\end{corollary}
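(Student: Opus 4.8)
The plan is to deduce this at once from Lemma~\ref{lind}. First I would fix, for every affine ray of $\sigma^\vee$, its primitive integer generator, and let $v_1,\dots,v_k$ be the vectors obtained from the pairwise distinct affine rays. Since distinct rays of a pointed cone are generated by non-proportional vectors, the $v_i$ are pairwise non-proportional, in particular all nonzero. I claim $\{v_1,\dots,v_k\}$ is linearly independent in $M_\QQ$: if $\sum_i c_i v_i = 0$ with some $c_j \neq 0$, then $v_j = -c_j^{-1}\sum_{i\neq j} c_i v_i$ exhibits a nonzero vector of the affine ray carrying $v_j$ as a linear combination of vectors lying on the other affine rays, contradicting Lemma~\ref{lind}.

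Once linear independence is in hand, the rest is pure linear algebra: $M_\QQ = M\otimes_\ZZ\QQ \cong \QQ^n$ has dimension $n$, so any linearly independent subset of $M_\QQ$ has at most $n$ elements, and therefore $k \le n$.

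I do not expect a genuine obstacle in the corollary itself; the substantive work was already carried out in Lemma~\ref{lind} (which in turn uses that an affine facet $F$ is a hyperplane section of $\sigma^\vee$ containing all extremal rays but one, and that $\sigma^\vee$ is full-dimensional because $T$ acts effectively). The only bookkeeping point worth stating carefully is that "distinct affine rays" indeed contribute distinct, pairwise non-proportional vectors, so that linear independence of the generators genuinely bounds their number by $\dim M_\QQ = n$.
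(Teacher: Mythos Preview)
Your argument is correct and follows exactly the same route as the paper: the paper's proof of this corollary consists of the single sentence ``Obviously follows from the previous lemma,'' and you have simply spelled out what ``obviously'' means by extracting linear independence of the primitive generators from Lemma~\ref{lind} and bounding their number by $\dim M_\QQ=n$.
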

\begin{proof}
Obviously follows from the previous lemma.
\end{proof}

\begin{corollary}\label{mlx}
Suppose there are $k$ affine rays in $n$-dimensinal cone $\sigma^{\vee}$. Then $\ML^*_h(X) = \ML^*(X) \cap \ML_h(X)$ is the subalgebra, corresponding to the monoid $P \cap \tau_{\ML^*_h}$, where $\tau_{\ML^*_h}$ is $(n-k)$-dimensional face of $\sigma^{\vee}$, generated by all non-affine rays.
\end{corollary}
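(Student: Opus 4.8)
The plan is to compute $\ML^*_h(X)$ directly, as the intersection of the kernels of the homogeneous $\LND$s admitting a slice, and then to read off both the combinatorial description and the dimension from convex geometry. First I would use Lemma~5.1 together with Proposition~\ref{ak}(c) to note that every homogeneous $\LND$ of $\KK[X]$ has the form $\lambda\partial_{\rho,e}$ for a Demazure root $e\in\mathscr{R}_{\rho}$, and that its kernel equals $\bigoplus_{m\in P\cap\widehat{\rho}}\KK\chi^{m}$, which depends only on the facet $\widehat{\rho}$ of $\sigma^{\vee}$. By Proposition~5.3 such a derivation admits a slice exactly when $\widehat{\rho}$ is an affine facet whose corresponding ray is affine; by hypothesis there are exactly $k$ of these, say $F_{1},\dots,F_{k}$, with affine rays $\tau_{1},\dots,\tau_{k}$ and primitive generators $r_{1},\dots,r_{k}\in M$. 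Hence
$$
\ML^*_h(X)=\bigcap_{i=1}^{k}\Bigl(\bigoplus_{m\in P\cap F_{i}}\KK\chi^{m}\Bigr)=\bigoplus_{m\in P\cap(F_{1}\cap\dots\cap F_{k})}\KK\chi^{m},
$$
and the whole question reduces to identifying the face $F_{1}\cap\dots\cap F_{k}$ of $\sigma^{\vee}$; the remaining equality $\ML^*_h(X)=\ML^{*}(X)\cap\ML_{h}(X)$ I would derive by combining this description with Theorem~\ref{m}.

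The main step is this identification. Writing $n_{\rho_{i}}$ for the primitive generator of the ray of $\sigma$ dual to $F_{i}$, one has $F_{i}=\{x\in\sigma^{\vee}:\langle x,n_{\rho_{i}}\rangle=0\}$; and from the proof of Lemma~5.2 together with the definition of an affine ray one extracts the biorthogonality relations $\langle r_{i},n_{\rho_{j}}\rangle=\delta_{ij}$, and the fact that $\tau_{i}$ is the unique extremal ray of $\sigma^{\vee}$ not contained in $F_{i}$. In particular every non-affine extremal ray $\xi$ lies in each $F_{i}$, so $\langle\xi,n_{\rho_{i}}\rangle=0$ for all $i$. Let $\tau_{\ML^*_h}$ denote the subcone of $\sigma^{\vee}$ generated by all non-affine extremal rays. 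Then $\tau_{\ML^*_h}\subseteq F_{1}\cap\dots\cap F_{k}$ is immediate, and for the converse I would take $x\in F_{1}\cap\dots\cap F_{k}$, write it as a nonnegative combination $x=\sum_{j}a_{j}r_{j}+\sum_{\xi}b_{\xi}\xi$ of primitive generators of the extremal rays of $\sigma^{\vee}$, and pair with $n_{\rho_{i}}$: the relations above give $\langle x,n_{\rho_{i}}\rangle=a_{i}$, whence $a_{i}=0$ for every $i$ and $x\in\tau_{\ML^*_h}$. So $F_{1}\cap\dots\cap F_{k}=\tau_{\ML^*_h}$ is a face of $\sigma^{\vee}$, and $\ML^*_h(X)$ is the subalgebra corresponding to the monoid $P\cap\tau_{\ML^*_h}$.

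Finally I would check that $\dim\tau_{\ML^*_h}=n-k$. The relations $\langle r_{i},n_{\rho_{j}}\rangle=\delta_{ij}$ (equivalently, Lemma~\ref{lind}) show that $r_{1},\dots,r_{k}$ are linearly independent and that the $\QQ$-linear map $\varphi\colon M_{\QQ}\to\QQ^{k}$ given by $\varphi(m)=(\langle m,n_{\rho_{1}}\rangle,\dots,\langle m,n_{\rho_{k}}\rangle)$ is surjective; hence $\ker\varphi$ has dimension $n-k$, and since the linear span of $\tau_{\ML^*_h}$ is contained in $\ker\varphi$ we get $\dim\tau_{\ML^*_h}\le n-k$. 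Conversely $\sigma^{\vee}=\tau_{\ML^*_h}+\cone(r_{1},\dots,r_{k})$ and $\sigma^{\vee}$ spans $M_{\QQ}$, so $M_{\QQ}$ is spanned by $\tau_{\ML^*_h}$ together with $r_{1},\dots,r_{k}$, forcing $\dim\tau_{\ML^*_h}\ge n-k$. The hard part will be the middle step: it is where the biorthogonality relations of Lemma~5.2 do the real work, and it rests on the (previously established) fact that an affine facet of $\sigma^{\vee}$ omits exactly one of its extremal rays, so that those relations apply to every generator appearing in the cone decomposition of an arbitrary point of $F_{1}\cap\dots\cap F_{k}$.
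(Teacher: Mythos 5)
Your argument for the combinatorial description is correct and follows essentially the same route as the paper: identify the kernels of the homogeneous LNDs admitting a slice with the affine facets $F_1,\dots,F_k$ via Proposition 5.3, show that their intersection is the subcone spanned by the non-affine extremal rays, and extract the dimension $n-k$ from the linear independence of the affine rays (Lemma \ref{lind}). The paper's proof is a compressed version of yours; your biorthogonality relations $\langle r_i,n_{\rho_j}\rangle=\delta_{ij}$ and the pairing computation make explicit the step the paper only gestures at (``the intersection of facets, corresponding to kernels, consists only of non-affine rays'').

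The one genuine problem is your plan to obtain the equality $\ML^*_h(X)=\ML^*(X)\cap\ML_h(X)$ ``by combining this description with Theorem~\ref{m}.'' Part (b) of that theorem is proved only after this corollary and its proof uses $\tau_{\ML^*_h}$, so invoking it here is circular. Worse, part (a) alone already gives $\ML(X)=\ML_h(X)\subseteq\ML^*(X)$, hence $\ML^*(X)\cap\ML_h(X)=\ML_h(X)$, which is the subalgebra of the intersection of \emph{all} almost saturated facets; this need not coincide with the intersection of the affine facets you computed (the later rigidity corollary is precisely about when these two agree), so no appeal to Theorem~\ref{m} can close that gap. The paper itself never proves the displayed equality and in effect treats $\ML^*_h(X)$ as notation for the intersection of the kernels of homogeneous LNDs with slices, so this is arguably a defect of the statement rather than of your argument --- but you should drop the claim that the equality follows from Theorem~\ref{m}.
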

\begin{proof}
The kernels of homogeneous LNDs with slices are subalgebras corresponding to affine facets. Affine ray cannot lie at the intersection of these facets, because their intersection with the corresponding face equal to 0. Hence, the intersection of facets, corresponding to kernels, consists only of non-affine rays. On the other hand, if a facet corresponds to a homogeneous LND with slice, then it contains all rays except the given one affine ray, i.e. including all non-affine ones. Note that the intersection of all affine facets have the dimension~$n-k$, because corresponding affine rays are linearly independent by the Lemma~\ref{lind}.
\end{proof}

\begin{corollary}\label{form}
If an $n$-dimensional affine toric variety $X$ admits LND with slice, then it have the form $X' \times \AA^k$, where $X'$ is the $(n-k)$ dimensional affine toric subvariety, corresponding to the algebra $\ML^*_h(X)$ and $k$ is amount of different affine rays.
\end{corollary}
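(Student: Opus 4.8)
The plan is to reduce the assertion to a combinatorial statement about the monoid $P$ and then apply the functor $\KK[-]$. Put $P_0:=P\cap\tau_{\ML^*_h}$, so that $\KK[P_0]=\ML^*_h(X)=\KK[X']$ by Corollary~\ref{mlx}. Since $X$ admits an $\LND$ with a slice, Lemma~5.1 supplies a homogeneous one with a homogeneous slice, and Proposition~5.3 then produces at least one affine facet; let $\tau_1,\dots,\tau_k$ be all the affine rays of $\sigma^\vee$ (finitely many, by Corollary~\ref{nrays}), let $r_i$ be the primitive integer vector on $\tau_i$, and let $F_i$ be the affine facet attached to $\tau_i$, corresponding to the extremal ray $\rho_i$ of $\sigma$. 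By Lemma~5.2, $F_i$ contains every extremal ray of $\sigma^\vee$ other than $\tau_i$, so $r_j\in F_i$ whenever $i\neq j$ and $\tau_{\ML^*_h}=\bigcap_{i=1}^kF_i$; together with the relation $\langle r_i,n_{\rho_i}\rangle=1$ from the definition of an affine ray, this gives $\langle r_i,n_{\rho_j}\rangle=\delta_{ij}$ (Kronecker delta). I would then prove that the monoid homomorphism
$$
\Phi\colon\ P_0\times\ZZ_{\geq 0}^{k}\ \longrightarrow\ P,\qquad (p_0,c_1,\dots,c_k)\ \longmapsto\ p_0+\sum_{i=1}^k c_i r_i,
$$
is an isomorphism. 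Granting this, $\KK[X]=\KK[P]\cong\KK[P_0]\otimes_\KK\KK[\ZZ_{\geq 0}^{k}]=\ML^*_h(X)\otimes_\KK\KK[t_1,\dots,t_k]=\KK[X']\otimes_\KK\KK[\AA^k]$, so $X\cong X'\times\AA^k$, and $\dim X'=\dim\tau_{\ML^*_h}=n-k$ by Corollary~\ref{mlx}.

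For surjectivity of $\Phi$ I would iterate the one-variable splitting furnished by Proposition~5.3 and the remark following it, namely $P=(P\cap F_i)\times\ZZ_{\geq 0}r_i$ for each $i$, in which the $\ZZ_{\geq 0}$-coordinate of a weight $m$ is $\langle m,n_{\rho_i}\rangle$ and its $F_i$-part is $m-\langle m,n_{\rho_i}\rangle r_i$. Given $p\in P$, write $p=q_1+c_1r_1$ with $q_1\in P\cap F_1$; inductively, if $p=q_j+\sum_{i\le j}c_ir_i$ with $q_j\in P\cap F_1\cap\dots\cap F_j$, split $q_j\in P$ through $F_{j+1}$ as $q_j=q_{j+1}+c_{j+1}r_{j+1}$ with $q_{j+1}\in P\cap F_{j+1}$ and $c_{j+1}\ge 0$. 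Since $q_{j+1}$ and $c_{j+1}r_{j+1}$ both lie in $\sigma^\vee$, their sum $q_j$ lies in the face $F_1\cap\dots\cap F_j$, and $r_{j+1}\in F_1\cap\dots\cap F_j$ because $\tau_{j+1}\subseteq F_i$ for each $i\le j$; as a face of a convex cone is extreme, $q_{j+1}\in F_1\cap\dots\cap F_j$, and the induction continues, so after $k$ steps $q_k\in P\cap\bigcap_iF_i=P_0$. For injectivity, from $p_0+\sum c_ir_i=p_0'+\sum c_i'r_i$ with $p_0,p_0'\in P_0\subseteq F_j$, pairing with $n_{\rho_j}$ annihilates $p_0$ and $p_0'$ and, by $\langle r_i,n_{\rho_j}\rangle=\delta_{ij}$, leaves $c_j=c_j'$ for every $j$; hence also $p_0=p_0'$.

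The main obstacle is the inductive surjectivity step: keeping the successive "peelings" compatible, i.e. guaranteeing that stripping off a multiple of $r_{j+1}$ leaves the residual weight inside the intersection of all previously used affine facets. That is where the combinatorial content is spent — Lemma~5.2 (an affine facet omits precisely its own ray), the extremality of faces of a convex cone, and, underlying Corollary~\ref{mlx}, Lemma~\ref{lind} (linear independence of the affine rays, which forces $\dim\tau_{\ML^*_h}=n-k$, so that the polynomial factor is exactly $\AA^k$). Everything past "$\Phi$ is an isomorphism of monoids" is formal: monoid algebras carry products to tensor products, $\spec$ carries tensor products of affine $\KK$-algebras to products of varieties, and $\KK[P_0]=\ML^*_h(X)=\KK[X']$ is Corollary~\ref{mlx}; incidentally this realizes $X'$ as the closed toric subvariety of $X$ which is the orbit closure of the face $\tau_{\ML^*_h}$.
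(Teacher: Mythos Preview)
Your proof is correct and follows essentially the same approach as the paper: both iterate the one-step splitting $P=(P\cap F_i)\times\ZZ_{\geq 0}r_i$ supplied by Proposition~5.3 and the remark following it. The paper's proof is a single line asserting $\KK[X] = \ML^{*}_{h}(X) \otimes \KK[\chi_1, \ldots, \chi_k]$ with the $\chi_i$ the slices attached to the affine rays; your version makes explicit the monoid isomorphism $\Phi$ and verifies the compatibility of the successive peelings (via the extremality-of-faces argument and the relations $\langle r_i,n_{\rho_j}\rangle=\delta_{ij}$) that the paper leaves to the reader.
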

\begin{proof}
We have $\KK[X] = \ML^{*}_{h}(X) \otimes \KK[\chi_1, \ldots, \chi_k]$, where $\chi_i$ are slices, corresponding to different affine vectors.
\end{proof}

\begin{corollary}
$\ML^*_h(X) = \KK$ if and only if $X = \AA^n$.
\end{corollary}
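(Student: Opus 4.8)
The plan is to derive both implications from the structural results already obtained — chiefly Corollaries \ref{mlx} and \ref{form} — so that essentially no new geometry is needed, only a careful treatment of the degenerate cases.

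For the implication $X=\AA^n\Rightarrow\ML^{*}_{h}(X)=\KK$ I would argue directly on $\AA^n=\spec\KK[x_1,\dots,x_n]$: each partial derivative $\partial/\partial x_i$ is a homogeneous locally nilpotent derivation with slice $x_i$, hence lies in $\LND^{*}(\KK[X])$, so $\ML^{*}(\KK[X])\subseteq\bigcap_{i=1}^{n}\Ker(\partial/\partial x_i)=\KK$ and therefore $\ML^{*}(\KK[X])=\KK$. Since $\ML^{*}_{h}(X)=\ML^{*}(\KK[X])\cap\ML_{h}(\KK[X])\subseteq\ML^{*}(\KK[X])$, this gives $\ML^{*}_{h}(X)=\KK$.

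For the converse I would start from $\ML^{*}_{h}(X)=\KK$ and apply Corollary \ref{mlx}: it identifies $\ML^{*}_{h}(X)$ with the subalgebra $\bigoplus_{m\in P\cap\tau_{\ML^{*}_{h}}}\KK\chi^m$, where $\tau_{\ML^{*}_{h}}$ is the face of $\sigma^{\vee}$ generated by all non-affine extremal rays. Such a subalgebra equals $\KK$ precisely when $P\cap\tau_{\ML^{*}_{h}}=\{0\}$. But every extremal ray of $\sigma^{\vee}$ contains a nonzero point of $P$ — a suitable positive multiple of its primitive generator — since an extremal ray of $\sigma^{\vee}$ cannot consist solely of holes when $P$ is finitely generated (the same observation used in the proof of Lemma \ref{int}). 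Hence $\tau_{\ML^{*}_{h}}$ has no extremal ray at all, so $\tau_{\ML^{*}_{h}}=\{0\}$, i.e. $n-k=0$; equivalently $\sigma^{\vee}$ has exactly $k=n$ extremal rays and all of them are affine. If $n=0$ then $X$ is a point, so $X=\AA^0$. If $n\geq1$ then, fixing an affine ray together with its affine facet $F$, Proposition 5.3 shows that $\KK[X]$ admits a homogeneous LND with a slice; hence Corollary \ref{form} applies and yields $X\cong X'\times\AA^n$ with $\KK[X']=\ML^{*}_{h}(X)=\KK$, so $X'=\spec\KK$ and $X\cong\AA^n$.

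I expect the main obstacle to be not any single computation but making the converse watertight in the degenerate situations, and in particular ruling out that $\sigma^{\vee}$ is non-pointed (equivalently, that $X$ has no $T$-fixed point). If it were non-pointed, $\KK[X]$ would contain a nonconstant invertible function $\chi^m$ with $m$ in the lineality space, and such a unit lies in the kernel of every locally nilpotent derivation, hence in $\ML^{*}_{h}(X)$ — contradicting $\ML^{*}_{h}(X)=\KK$. Once $\sigma^{\vee}$ is pointed it is the cone over its extremal rays, the reduction above goes through, and the statement becomes a direct corollary of Corollaries \ref{mlx} and \ref{form}.
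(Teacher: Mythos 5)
Your proof is correct and follows essentially the same route as the paper, whose entire argument is that the forward direction is obvious and the converse follows from Corollary~\ref{form}. You simply supply the details the paper leaves implicit (the coordinate derivations $\partial/\partial x_i$ for the forward direction, and the passage through Corollary~\ref{mlx} to see that all extremal rays are affine so that an LND with slice exists and Corollary~\ref{form} applies), together with a careful treatment of the degenerate and non-pointed cases.
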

\begin{proof}
It is obvious, that $\ML^*_h(\AA^n) = \KK$. Inverse follows from Corollary~\ref{form}.
\end{proof}

Corollaries above allow us to prove the criterion of {\it rigidity} of the subvariety $X'$. Recall that variety $X$ called {\it rigid} if $\KK[X]$ does not admit any LND.  

\begin{corollary}
Subvariety $X'$, corresponding to $\ML^*_h(X)$, is rigid affine toric variety if and only if $\ML^*_h(X) = \ML_h(X)$.
\end{corollary}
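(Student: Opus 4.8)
The plan is to reduce both conditions to statements about faces of the cone $\sigma^{\vee}$, and then to compare them using the product decomposition $X\cong X'\times\AA^{k}$ of Corollary~\ref{form}. Throughout, $X$ is assumed to admit an $\LND$ with slice, so that $X'$ is defined. Write $P'$ for the weight monoid of $X'$, so that $\KK[X']=\ML^{*}_h(X)=\bigoplus_{m\in P'}\KK\chi^{m}$; by Corollary~\ref{mlx}, $P'=P\cap\sigma'^{\vee}$, where $\sigma'^{\vee}$ is the face of $\sigma^{\vee}$ spanned by the non-affine rays, and this face is the cone of $X'$. First I would record that this decomposition respects the torus grading. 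Applying Proposition~5.3 and the discussion following it once for each affine ray, as in the proof of Corollary~\ref{form}, gives, for the primitive vectors $r_{1},\dots,r_{k}$ on the affine rays, the splittings $M=M'\oplus\ZZ r_{1}\oplus\dots\oplus\ZZ r_{k}$, $P=P'\oplus\ZZ_{\ge 0}r_{1}\oplus\dots\oplus\ZZ_{\ge 0}r_{k}$, and $\sigma^{\vee}=\sigma'^{\vee}+\QQ_{\ge 0}r_{1}+\dots+\QQ_{\ge 0}r_{k}$; the last sum is direct because $r_{1},\dots,r_{k}$ are linearly independent and independent of the span of $\sigma'^{\vee}$ (the remark after Lemma~5.2 and Lemma~\ref{lind}), and the splitting of $M$ is realized by the integral functionals $\langle\,\cdot\,,n_{\rho_{i}}\rangle$, which vanish on $M'$, vanish on $r_{j}$ for $j\ne i$, and equal $1$ on $r_{i}$.

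Next I would reformulate the two sides. On the right: by the proof of Theorem~\ref{m}(a), $\ML_h(X)$ is the monoid algebra of $P\cap\tau_{\ML_h}$, where $\tau_{\ML_h}$ is the intersection of all almost saturated facets of $\sigma^{\vee}$; and since every extremal ray of $\sigma^{\vee}$ meets $P$ nontrivially (as in the proof of Lemma~\ref{int}), a face $\tau\preccurlyeq\sigma^{\vee}$ is recovered from $P\cap\tau$, so $\ML^{*}_h(X)=\ML_h(X)$ if and only if $\sigma'^{\vee}=\tau_{\ML_h}$. On the left: $X'$ is rigid if and only if $\LND(\KK[X'])=\{0\}$, equivalently $\ML(X')=\KK[X']$, which by Theorem~\ref{m}(a) applied to $X'$ amounts to $\ML_h(X')=\KK[X']$, i.e.\ to $X'$ carrying no nonzero homogeneous $\LND$; by Proposition~\ref{ak}(c) and Lemma~\ref{eu} applied to $X'$, this holds if and only if no facet of $\sigma'^{\vee}$ is almost saturated with respect to $P'$, i.e.\ $\tau_{\ML_h}(X')=\sigma'^{\vee}$, where $\tau_{\ML_h}(X')$ is the intersection of the almost saturated facets of $\sigma'^{\vee}$.

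The heart of the argument will be the identity $\tau_{\ML_h}=\tau_{\ML_h}(X')$, under the identification of $\sigma'^{\vee}$ with the face $\sigma'^{\vee}+\{0\}$ of $\sigma^{\vee}$. To prove it I would use that the facets of the product cone $\sigma^{\vee}=\sigma'^{\vee}+\sum_{i}\QQ_{\ge 0}r_{i}$ are exactly the cones $F'+\sum_{i}\QQ_{\ge 0}r_{i}$ with $F'$ a facet of $\sigma'^{\vee}$ — each of which omits at least one non-affine ray — together with the $k$ ``slice'' facets $\sigma'^{\vee}+\sum_{j\ne i}\QQ_{\ge 0}r_{j}$, each of which contains all non-affine rays and is the kernel facet of a well-defined $\delta_{e}$ (Proposition~5.3), hence almost saturated by Lemma~\ref{eu}. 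Since saturation of a monoid commutes with direct products and $\bigoplus_{i}\ZZ_{\ge 0}r_{i}$ is saturated, $P_{sat}=P'_{sat}\oplus\bigoplus_{i}\ZZ_{\ge 0}r_{i}$, whence a point $(p',n)\in P$ is a saturation point of $P$ exactly when $p'$ is a saturation point of $P'$ (the translated cone splits as a product, and intersecting with $M$ and requiring membership in $P$ reduces to the first factor); consequently $F'+\sum_{i}\QQ_{\ge 0}r_{i}$ is almost saturated with respect to $P$ exactly when $F'$ is almost saturated with respect to $P'$. Intersecting all almost saturated facets: the $k$ slice facets contribute $\sigma'^{\vee}+\{0\}$, and the remaining ones contribute $\bigl(\bigcap_{F'}F'\bigr)+\sum_{i}\QQ_{\ge 0}r_{i}=\tau_{\ML_h}(X')+\sum_{i}\QQ_{\ge 0}r_{i}$; intersecting these two, and using $\tau_{\ML_h}(X')\subseteq\sigma'^{\vee}$ together with the orthogonality of the $r_{i}$-part to the span of $\sigma'^{\vee}$, yields $\tau_{\ML_h}=\tau_{\ML_h}(X')+\{0\}$.

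Putting the pieces together: $\ML^{*}_h(X)=\ML_h(X)\iff\sigma'^{\vee}=\tau_{\ML_h}\iff\sigma'^{\vee}=\tau_{\ML_h}(X')\iff$ no facet of $\sigma'^{\vee}$ is almost saturated $\iff X'$ is rigid, which is the assertion. The hard part will be the careful set-up of the graded product decomposition in the first paragraph — in particular the lattice identity $M=M'\oplus\bigoplus_{i}\ZZ r_{i}$ and the verification that saturation points, holes, and the facet structure of $\sigma^{\vee}$ all respect the splitting $P=P'\oplus\bigoplus_{i}\ZZ_{\ge 0}r_{i}$; granting that, the remainder is routine bookkeeping with faces of a product of cones.
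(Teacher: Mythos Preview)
Your proposal is correct and follows essentially the same approach as the paper: both rest on the correspondence, under the product decomposition $X\cong X'\times\AA^{k}$, between almost saturated facets of $\sigma'^{\vee}$ and almost saturated facets of $\sigma^{\vee}$ that are not affine (obtained by adjoining or removing the affine rays). The paper argues the two implications separately by contradiction using this correspondence, whereas you package it as the single identity $\tau_{\ML_h}(X)=\tau_{\ML_h}(X')$ and set up the lattice and monoid splittings more carefully; this is a difference of presentation rather than of method.
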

\begin{proof}
Suppose, firstly, that $\ML^*_h(X) = \ML_h(X)$. Consider the cone~$\tau_{\ML^*_h}$. Assume that the variety $X'$ admit at least one LND. Then it admit at least one homogeneous LND. Therefore, from Lemma~4.2 it follows that there is at least one almost saturated facet $F$ in the cone~$\tau_{\ML^*_h}$. Then by adjoining to $F$ all affine rays we can construct new saturated facet $F_{\sigma^{\vee}}$ in origin cone $\sigma^{\vee}$. Hence, according to Lemma~4.2, there exists such homogeneous LND $\delta$, that $\Ker(\delta)$ is the subalgebra of $\KK[X]$, corresponding to the facet $F_{\sigma^{\vee}}$. Therefore, in terms of the cone $\sigma^{\vee}$, $\ML_h(X)$ corresponds to the intersection $\tau' \cap P$ for a some face $\tau' \preccurlyeq F \prec \tau_{\ML^*_h}$, which contradicts to the fact that $\ML^*_h(X) = \ML_h(X)$.

Suppose now the opposite, i.e that $X'$ is rigid variety. Consider again the cone $\tau_{\ML^*_h}$. Assume that $\ML^*_h(X) \neq \ML_h(X)$. Then there is almost saturated facet $F$ in the cone $\sigma^{\vee}$ such that it don't correspond to any homogeneous LND with slice. But all such facets are obtained by adjoining to some facet of the cone $\tau_{\ML^*_h}$ all affine rays. Therefore, the intersection $F \cap \tau_{\ML^*_h}$ is facet of the cone $\tau_{\ML^*_h}$, which is almost saturated, because contrary $F$ would not be saturated. Hence, $X'$ admits at least one homogeneous LND with slice, which is absurd.
\end{proof}

Now let us consider the structure of $\ML^*(X)$. The main idea will be to construct several LNDs with slice, the intersection of the kernels of which will be equal to $\ML_h(X)$. 

Let $\delta$ be any LND with slice, corresponding to the fixed facet $F$, and $\delta'$ be any homogeneous LND corresponding to any another almost saturated facet $F'$. Let us prove the following lemma.

\begin{lemma}
$\delta + \delta'$ is LND with slice.
\end{lemma}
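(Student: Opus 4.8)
The plan is to show that $\delta+\delta'$ is locally nilpotent by exploiting the grading and the special combinatorial position of the affine facet $F$, and then exhibit a slice explicitly. First I would set up coordinates: let $\rho$ be the extremal ray of $\sigma$ corresponding to $F$, so by Proposition 5.3 the monoid $P$ splits as $(P\cap F)\times(\ZZ_{\ge 0}\,r)$ with $r$ the primitive affine vector on $\tau$ and $\langle r, n_\rho\rangle=1$. Here $\delta$ is the derivation $\delta(\mu\chi^m)=\mu\langle m,n_\rho\rangle\chi^{m-r}$ of formula (1), with slice $\chi^r$. The other derivation $\delta'$ is $\mathfrak X(T)$-homogeneous of degree $e'\in\mathscr R_{\rho'}$ for a distinct almost saturated facet $F'$, so $\langle e', n_\rho\rangle\ge 0$ since $e'$ is a Demazure root for $\rho'\neq\rho$; in fact $\delta'$ preserves the $\ZZ$-grading given by the linear functional $n_\rho$ on $M$, because it shifts weights by $e'$ and $\langle e',n_\rho\rangle\ge 0$.

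The key observation is that $\delta$ strictly decreases the $n_\rho$-degree (by exactly $1$ on every homogeneous element it does not kill, since $\langle -r,n_\rho\rangle=-1$), while $\delta'$ is non-decreasing in the $n_\rho$-degree. So I would grade $\KK[X]=\bigoplus_{d\ge 0}\KK[X]^{(d)}$ by $d=\langle m,n_\rho\rangle\ge 0$ (nonnegative because $n_\rho\in\sigma$ and $P\subset\sigma^\vee$), and note $\delta$ lowers $d$ by one on its image, $\delta'$ fixes or raises $d$. Then $\delta+\delta'$ restricted to the filtration $F_d=\bigoplus_{i\le d}\KK[X]^{(i)}$ has the property that its ``top part'' with respect to this filtration is exactly $\delta'$, and iterating $\delta+\delta'$ on a homogeneous $f\in\KK[X]^{(d)}$: each application of $\delta$ drops the degree, each application of $\delta'$ keeps it or raises it — but $\delta'$ is locally nilpotent, so $\delta'$ cannot be applied indefinitely at a fixed degree. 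A clean way to package this: show $\delta$ is locally nilpotent on $\Ker\delta'$-coefficients and that $\delta'(\Ker\delta)\subseteq\Ker\delta$, i.e. that the two derivations are in a sufficiently triangular position that one can apply the standard lemma (see \cite[Principle 2.3 or Exercise]{F}) that a sum of an LND and a ``lower-order'' LND that commute up to the filtration is again an LND. Concretely I would prove by induction on $d$ that $\delta+\delta'$ is nilpotent on $\KK[X]^{(d)}$: the base $d=0$ is $\Ker\delta$, on which $\delta+\delta'=\delta'$ acts locally nilpotently; for the inductive step, $(\delta+\delta')(f)$ for $f\in\KK[X]^{(d)}$ equals $\delta'(f)\in\KK[X]^{(\ge d)}$ plus $\delta(f)\in\KK[X]^{(d-1)}$, and one controls the $\KK[X]^{(\ge d)}$-part using local nilpotence of $\delta'$ together with $\delta'(\Ker\delta)\subseteq\Ker\delta$ — the latter holds because $\Ker\delta$ is the subalgebra on $P\cap F$ and $\delta'$, having degree $e'$ with $\langle e',n_\rho\rangle=0$ forced on that subalgebra... actually $\langle e',n_\rho\rangle$ need not vanish, so instead I use that $\delta'$ shifts $P\cap F$ into $P$ and its image still has $n_\rho$-degree $\ge 0$, keeping the argument inside the filtration.

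For the slice: I claim $\chi^r$ is still a slice for $\delta+\delta'$. Indeed $\delta(\chi^r)=\langle r,n_\rho\rangle\chi^{r-r}=\chi^0=1$, and $\delta'(\chi^r)=\langle e', n_\rho\rangle$-type term $\cdot\,\chi^{r+e'}$; if this is nonzero it is a genuine element of $\KK[X]$ of positive $n_\rho$-degree, so $(\delta+\delta')(\chi^r)=1+(\text{something of positive degree})$, which is not quite $1$. So I would instead produce the slice by the standard device: since $\delta+\delta'$ is an LND and $(\delta+\delta')(\chi^r)=1+h$ with $h\in\Ker(\delta+\delta')$ being impossible to guarantee directly, I correct $\chi^r$ by subtracting $\int h$; more cleanly, once $\delta+\delta'$ is known to be locally nilpotent and $(\delta+\delta')(\chi^r)\in\KK^\times+ \mathfrak m$ is invertible-leading, one invokes \cite[Corollary 1.29 or Principle 11]{F} that an LND admitting an element whose image is $1$ after a triangular change of variables has a slice — equivalently, one shows directly that the element $1$ lies in the image of $\delta+\delta'$, which follows since $(\delta+\delta')(\chi^r)=1+(\delta'\chi^r)$ and $\delta'\chi^r\in(\delta+\delta')(\KK[X])$ by local nilpotence and an inductive degree argument identical to the one above. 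Then $\delta+\delta'$ has a slice.

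The main obstacle I anticipate is precisely the interaction term: verifying that $\delta'$ does not destroy the triangular (filtered) structure that makes $\delta+\delta'$ locally nilpotent, i.e. controlling $\delta'$ on $\Ker\delta$ and on the successive quotients $\KK[X]^{(d)}$. This requires using the affineness of $\tau$ (so that the monoid genuinely splits off the $\ZZ_{\ge 0}r$ factor and $\delta$ acts as a ``partial $\partial/\partial\chi^r$'') together with the Demazure-root condition $\langle e',n_\rho\rangle\ge 0$ for the other facet; everything else is the routine filtered-LND lemma from \cite{F} plus the explicit slice bookkeeping.
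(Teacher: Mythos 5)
There is a genuine gap: you filter by the wrong linear functional, and the induction you describe does not terminate. You grade by $d=\langle m,n_\rho\rangle$, the functional that $\delta$ strictly lowers, while $\delta'$ shifts $d$ by $\langle e',n_\rho\rangle\ge 0$. Two things then break. First, the base case: on $\Ker\delta$ the operator $\delta+\delta'$ agrees with $\delta'$ only for a single application; since $\delta'$ need not preserve $\Ker\delta$ (one can have $\langle e',n_\rho\rangle>0$), $(\delta+\delta')^2\ne(\delta')^2$ there, so ``$\delta+\delta'$ acts as the LND $\delta'$ on $\KK[X]^{(0)}$'' is not a valid base case. Second, the step: a mixed word such as $\delta'\delta\delta'\delta\cdots$ applied to $f\in\KK[X]^{(d)}$ raises $d$ by $\langle e',n_\rho\rangle$ at each $\delta'$ and lowers it by $1$ at each $\delta$, so the degree need never descend to the base case, and local nilpotence of $\delta$ and $\delta'$ separately does not kill these mixed words. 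You notice the difficulty (``$\langle e',n_\rho\rangle$ need not vanish'') but the patch ``the image still has $n_\rho$-degree $\ge 0$'' does not restore termination. The filtration that works --- and is the one underlying the paper's proof --- is by $d'=\langle m,n_{\rho'}\rangle$, i.e.\ by layers parallel to the \emph{other} facet $F'$: the Demazure-root condition gives $\langle e',n_{\rho'}\rangle=-1$, so $\delta'$ strictly lowers $d'$, while $\delta$ preserves $d'$ because its shift vector $-r$ lies on the affine ray $\tau$ and $\tau\subset F'$ (any facet $F'\ne F$ must contain the unique extremal ray $\tau$ not contained in $F$, since otherwise $F'\subseteq F$). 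Then $(\delta+\delta')^N(f)=\delta^N(f)+(\text{terms of strictly smaller } d')$, the first summand vanishes for large $N$, and induction on $d'$ closes, with base case $\Ker\delta'$ on which $\delta+\delta'$ genuinely restricts to the LND $\delta$.

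The same observation $r\in\tau\subset F'$ also repairs your slice computation: $\delta'(\chi^r)=\lambda\langle r,n_{\rho'}\rangle\chi^{r+e'}=0$ because $\langle r,n_{\rho'}\rangle=0$, so $(\delta+\delta')(\chi^r)=\delta(\chi^r)=1$ and $\chi^r$ is already a slice. The entire detour about correcting $\chi^r$ by ``subtracting $\int h$'' or invoking a principle from \cite{F} is unnecessary and, as written, never actually exhibits an element mapping to $1$.
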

\begin{proof}
Let us prove, firstly, that $\delta + \delta'$ is LND. Consider in cone $\sigma^{\vee}$ the layers parallel to the facet $F$, i.e. sets $L_i = \{ v \in \sigma^{\vee} | \langle v, n_{f} \rangle = i\}$, where~$f$ is the ray of the cone $\sigma$ corresponding to the facet $F$. It suffices to show that $\delta + \delta'$ is an LND on $L_i$ for any $i$. Note first that for any $v \in L_i$ image $\delta(v) \in L_i$, because in terms of the cone $\sigma^{\vee}$ it is the shift on the vector corresponding to this derivation, which belongs to the facet $F$. $\delta'(v) \in L_{i - 1}$ by the definition of Demazure roots. We are going to prove it by induction on $i$.

Base. Let $i = 0$. $L_0$ is equal to $F$ by definition. By previous remarks, for $v \in L_0$ we have $(\delta + \delta')(v) \in L_0$, and also $(\delta + \delta')(v) = \delta(v)$, because~$F$ is the kernel of $\delta'$. Let $k$ be a natural number such that $\delta^k(v) = 0$. Then $(\delta + \delta')^k(v) = 0$. The base is proven.

Step of induction. Let the goal is proved for all $i < n$. let us prove it for $i = n$. Let $v \in L_i$ and $k$ be a positive integer such that $\delta^k(v) = 0$. Consider $(\delta + \delta')^k(v)$:
$$
(\delta + \delta')^k(v) = \delta^k(v) + \delta^{k-1}(\delta'(v)) + \delta^{k-2}(\delta '(\delta(v))) \ldots + \delta(\delta'^{k-1}(v)) +
$$
$$
+ \delta'^k(v).
$$
Note that all summands except the first one (which is equal to 0) contain $\delta'$ at least once, which reduces the layer by 1. Hence, $(\delta + \delta')(v)$ is a linear combination of elements from the layer $L_{n-1}$ or lower for which the lemma is true by the induction hypothesis. The step is proven.

Consider now a monomial $\chi$ on which $\delta(\chi) = 1$ and $\delta'(\chi) = 0$. It is obvious, that such a monomial exists. Therefore, $\delta + \delta'$ is an LND with slice.
\end{proof}

Note now that any element $f$ from the subalgebra, corresponding to the facet $F$, which belongs to $\Ker(\delta + \delta')$, lies in $F \cap F'$. Indeed, if $f$ has homogeneous components that do not lie in the intersection~$F \cap F'$, then the $\delta$ vanishes them, and the $\delta'$ in terms of the monoid $P$ transfers them in parallel to the corresponding vector $e'$ (Demazure root). It is a contradiction with the fact that $f \in \Ker(\delta + \delta')$. Hence, $\ML^*(X)$ belongs to the subalgebra, corresponding to $F' \cap \tau_{\ML^*_h}$. Thus, we can prove Theorem 1.1(b).

\begin{proof}[Proof of Theorem \ref{m}(b)]
From Theorem \ref{m}(a) we know, that $\ML_{h}(\KK[X]) = \ML(\KK[X])$. Hence, $\ML_{h}(\KK[X]) = \ML(\KK[X]) \subset \ML^{*}(\KK[X])$. On the other hand, we just get that for any almost saturated facet $F'$ Makar-Limanov--Freudenburg invariant $\ML^*(X)$ is contained in the subalgebra, corresponding to $F' \cap \tau_{\ML^*_h}$. Let $F_1, F_2, \ldots, F_m$ be all almost saturated facets. Then $\ML^*(X)$ belongs to the subalgebra, corresponding to 
$$
(F_1 \cap \tau_{\ML^*_h}) \cap (F_2 \cap \tau_{\ML^*_h}) \cap \ldots \cap (F_m \cap \tau_{\ML^*_h}) =
$$
$$
= F_1 \cap F_2 \cap \ldots \cap F_m \cap \tau_{\ML^*_h} = \tau_{\ML_h}.
$$
Therefore $\ML^{*}(\KK[X]) \subset \ML_{h}(\KK[X]) = \ML(\KK[X])$.
\end{proof}

\end{document}